\numberwithin{equation}{section}
\theoremstyle{plain}
\newtheorem{lem}{Lemma}[section]
\newtheorem{df}{Definition}[section]
\newtheorem{tw}{Theorem}[section]
\newtheorem{ex}{Example}[section]
\newtheorem{rem}{Remark}[section]
\newcommand{\esssuptT}{\textrm{ess} \sup_{\hspace{-15pt}t\in T}}
\newcommand{\R}{\mathbb{R}}
\begin{document}

\title{{\bf A strong ergodic theorem  for   extreme and intermediate order statistics}}

\author{
{\bf Aneta 
Buraczy\'{n}ska}\\
Faculty of Mathematics and Information Science,\\
Warsaw University of Technology, \\
ul. Koszykowa 75, 00-662 Warsaw, Poland, \\
e-mail: a.buraczynska@mini.pw.edu.pl
\and
{\bf Anna
Dembi\'{n}ska}\thanks{Corresponding author}\\
Faculty of Mathematics and Information Science,\\
Warsaw University of Technology, \\
ul. Koszykowa 75, 00-662 Warsaw, Poland, \\
e-mail: dembinsk@mini.pw.edu.pl}

\maketitle

\begin{abstract}
We study almost sure limiting behavior of extreme and intermediate order statistics arising from strictly stationary sequences. First, we provide sufficient dependence conditions under which these order statistics converges almost surely to the left or right  endpoint of the population support, as in the classical setup of sequences of independent and identically distributed random variables. Next, we derive a~generalization of this result valid in the class of all strictly stationary sequences. For this purpose, we introduce notions of conditional left and right endpoints of the support of a random variable given a sigma-field, and present basic properties of these concepts. Using these new notions, we prove that extreme and intermediate order statistics from any discrete-time, strictly stationary process converges almost surely to some random variable. We discribe the distribution of the limiting variate. Thus we establish a strong ergodic theorem for extreme and intermediate order statistics.
\end{abstract}

\noindent {\bf Keywords}: Extreme and intermediate order statistics; Stationary processes; Ergodic processes; Conditional quantiles; Almost sure convergence

\section{Introduction}
\label{sec1}
Let $(X_n, n\ge 1)$ be a~sequence of random variables (rv's) defined on the same probability space, and $X_{1:n}\le \cdots\le X_{n:n}$ be the order statistics corresponding to the sample $(X_1,\ldots,X_n)$. Following the standard notation, we will say that $(X_{k_n:n}, n\geq 1)$ is a sequence of (1) {\it extreme order statistics} if and only if (iff) $k_n$ or $n-k_n$ is fixed; (2)~{\it intermediate order statistics} iff $\min(k_n,n-k_n)\to\infty$ and $k_n/n\to\lambda\in\{0,1\}$ as $n\to\infty$; and (3) {\it central order statistics} iff $k_n/n\to\lambda\in(0,1)$  as $n\to\infty$.

In this paper we will focus on the asymptotic behavior of extreme and intermediate order statistics in the case when the sequence $(X_n, n\ge 1)$ forms a strictly stationary process. A lot is known about this behavior under some additional assumptions on the dependence structure between $X_i$'s. In particular, extreme value theory, dealing with limiting laws of suitably normalized extreme and intermediate order statistics, is well developed; see, for example, \cite{LLR83, LR88, MS00, F05, WRL82, Ch85} and the references given there. Yet there does not exist very much literature on the almost sure asymptotic behavior of  extreme and intermediate order statistics, even  in the case when $(X_n, n\ge 1)$ are independent and identically distributed (iid) rv's with common cumulative distribution function (cdf) $F$ satisfying some requirement. Under conditions that $F$ is sufficiently smooth, Watts \cite{W80} and Chanda \cite{Ch92} gave Bahadur-Kiefer-type representations for  intermediate order statistics of iid rv's. A~brief review on almost sure behavior of maxima of iid rv's can be found in Embrechts et al. \cite{embre1997}. Characterizations of the minmal almost sure growth of partial maxima of iid rv's, obtained among others by Klass \cite{K84, K85}, were generalized to extreme upper order statistics by Wang \cite{W97}.

In this paper, we concentrate on extension of the following almost sure property of  extreme and intermediate order statistics taken from  Embrechts et al. (\cite{embre1997},  Proposition 4.1.14).
\begin{tw}
\label{EKM}
If  $(X_n,n\ge1)$ is a~sequence of~iid rv's and $(k_n,n\ge 1)$ is a~sequence of  integers such that 
\begin{equation}
\label{warK}
1\leq k_n\le n\textrm{ for all }n\ge 1\textrm{ and } \lim_{n\to\infty}k_n/n=\lambda\in\{0,1\},
\end{equation}
then
\begin{equation}\label{zbieznosc}
X_{k_n:n}\xrightarrow{n\to\infty}\gamma_0^{X_1}   \; (\gamma_1^{X_1})\textrm{ almost surely according as }\lambda=0\ (\lambda=1),
\end{equation}
where $\gamma_0^{X_1}$ and $\gamma_1^{X_1}$ are the left and right endpoints of the support of $X_1$, respectively.
\end{tw}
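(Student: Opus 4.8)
The plan is to establish the two halves of \eqref{zbieznosc} separately; the cases $\lambda=1$ and $\lambda=0$ are mirror images, so I would carry out $\lambda=1$ in full and only indicate the symmetric changes for $\lambda=0$. Write $F$ for the common cdf of the $X_i$, put $\gamma_1:=\gamma_1^{X_1}=\sup\{x:F(x)<1\}\in(-\infty,+\infty]$, and observe at the outset that for every real $x<\gamma_1$ we have $p:=P(X_1>x)=1-F(x)>0$.

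For the upper estimate I would simply dominate $X_{k_n:n}\le X_{n:n}=\max_{1\le i\le n}X_i$. The sequence $(X_{n:n})_{n\ge1}$ is non-decreasing, hence converges a.s.\ to $\sup_{n\ge1}X_n$; a Borel--Cantelli argument (using independence and $P(X_n>x)=p>0$ for every $n$ and every $x<\gamma_1$, together with $P(X_1\le\gamma_1)=1$) shows this supremum equals $\gamma_1$ a.s. Hence $\limsup_n X_{k_n:n}\le\gamma_1$ a.s., the step being vacuous when $\gamma_1=+\infty$.

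For the lower estimate I would introduce the counting variables $S_n:=\#\{1\le i\le n:X_i>x\}=\sum_{i=1}^n\mathbf 1(X_i>x)$, which are $\mathrm{Binomial}(n,p)$. Since $X_{k_n:n}>x$ exactly when fewer than $k_n$ of the observations lie in $(-\infty,x]$, one has the identity $\{X_{k_n:n}>x\}=\{S_n\ge n-k_n+1\}=\{S_n/n>(n-k_n)/n\}$. Now the strong law of large numbers gives $S_n/n\to p>0$ a.s., while the hypothesis $k_n/n\to1$ forces $(n-k_n)/n\to 0$; therefore, on an event of probability one, $S_n/n>(n-k_n)/n$ for all large $n$, i.e.\ $X_{k_n:n}>x$ eventually, so $\liminf_n X_{k_n:n}\ge x$ a.s. Intersecting over a countable sequence $x_m\uparrow\gamma_1$ yields $\liminf_n X_{k_n:n}\ge\gamma_1$ a.s., which combined with the upper estimate settles $\lambda=1$.

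For $\lambda=0$ I would argue dually: $X_{k_n:n}\ge X_{1:n}=\min_{1\le i\le n}X_i\downarrow\gamma_0^{X_1}$ a.s., and for fixed $x>\gamma_0^{X_1}$ set $T_n:=\#\{1\le i\le n:X_i<x\}$, note $\{X_{k_n:n}<x\}=\{T_n\ge k_n\}$, and use $T_n/n\to P(X_1<x)>0$ together with $k_n/n\to0$ to conclude $X_{k_n:n}<x$ eventually, hence $\limsup_n X_{k_n:n}\le\gamma_0^{X_1}$ a.s. I do not expect a serious obstacle here; the only points needing care are the possibly infinite endpoints and the bookkeeping with strict versus non-strict inequalities at the test points $x$. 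The whole proof rests on the single elementary observation that the SLLN pins $S_n/n$ (resp.\ $T_n/n$) away from $0$, whereas $(n-k_n)/n$ (resp.\ $k_n/n$) is driven to $0$ by \eqref{warK}.
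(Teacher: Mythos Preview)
The paper does not prove Theorem~\ref{EKM} itself---it is quoted from Embrechts et al.---so the relevant comparison is with the proof of the generalization, Theorem~\ref{l3}. In the iid case that proof is exactly your argument: apply the strong law to indicator sums $\frac{1}{n}\sum_{i=1}^n I(X_i\le d_m)$ (the paper's threshold $d_m$ plays the role of your test point $x$), note that this limit is strictly positive while $k_n/n\to 0$, and conclude via the equivalence $\{X_{k_n:n}\le d_m\}=\{\sum I(X_i\le d_m)\ge k_n\}$. Your proof is correct.

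The one cosmetic difference: for the ``easy'' inequality the paper simply records $X_{k_n:n}\ge\gamma_0^{X_1}$ a.s.\ for every $n$ (because each $X_i\ge\gamma_0^{X_1}$ a.s.), which is shorter than your detour through $X_{1:n}\downarrow\gamma_0^{X_1}$ via Borel--Cantelli; likewise for $\lambda=1$ one can just say $X_{k_n:n}\le\gamma_1^{X_1}$ a.s.\ when $\gamma_1^{X_1}<\infty$. Also, the paper handles $\lambda=0$ directly and reduces $\lambda=1$ to it by passing to $(-X_n)$, whereas you do the reverse---an immaterial choice.
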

Following Smirnov \cite{smirnov1952}, we can view the above theorem as an analog of the strong law of large numbers for  extreme and intermediate order statistics. Our aim is to give its extension to the class of strictly stationary processes. We present such an extension in the whole generality. Firstly, our main result, Theorem \ref{ergodyczneNowe}, holds in the class of all strictly stationary processes -- no assumptions on dependence structure of the sequence $(X_n, n\ge 1)$ are needed. Secondly, no restrictions on the common univariate cdf $F$ of $(X_n, n\ge 1)$ are required.

The paper is organized as follows. In Section 2, we provide sufficient conditions on the structure of a strictly stationary sequence $(X_n, n\ge 1)$ ensuring that \eqref{warK} still implies \eqref{zbieznosc}. In Section 3, we introduce concepts of the conditional left and right endpoints of the support of an rv given a sigma-field. We also present a brief exposition of basic properties of these concepts. Next in Section 4, we use the new notions to formulate and prove the main result of the paper. Namely we show that extreme and intermediate order statistics arising from any strictly stationary sequence of rv's converge almost surely to some rv and we describe the distribution of the rv appearing in the limit. In Section 5, we give examples of application of the main result to some special cases of stationary processes. For readers' convenience in Appendix we recall the notion of essential supremum and its existence property that are  needed in one of our proof.

Throughout the paper we use the following notation.
Unless otherwise stated, the rv's $X_n$, $n\geq 1$, exist in a probability space $(\Omega, \mathcal F, \mathbb{P})$. $\mathbb R$,  $\mathbb Z$  and $\mathbb{N}$ represent the sets of real numbers, integers and positive  integers, respectively. For an rv $X$ with cdf $F$ we set
$$\gamma_0^X:=\inf\{x\in\mathbb{R}:\ F(x)>0\} \; \textrm{ and } \; \gamma_1^X:=\sup\{x\in\mathbb{R}:\ F(x)<1\},$$ 
and we call  $\gamma_0^X$ ($\gamma_1^X$)  the left  (right) endpoint of the support of $X$.
We write $I(\cdot)$ for the indicator function, that is $I(x\in A)=1$ if $x\in A$ and $I(x\in A)=0$ otherwise.
By  $\xrightarrow{a.s}$  we denote  almost sure convergence and $a.s.$ stands for almost surely. Moreover, when in context  different probability measures appear, to avoid confusion, we write $\xrightarrow{\mathbb{P}-a.s}$ and $\mathbb{E}_{\mathbb{P}}$ for almost sure convergence and expectation with respect to the measure $\mathbb{P}$, respectively, and we say that an event $A$ is true $\mathbb{P}$-$a.s.$ if $\mathbb{P}(A)=1$. Finally, an extended rv in $(\Omega, \mathcal F, \mathbb{P})$ is a $\mathcal F$-measurable function $X: \Omega\to[-\infty,\infty]$. We assume the usual conventions about arithmetic operations in $[-\infty,\infty]$: if $a\in\R$ then $a\pm\infty=\pm\infty$, $a/\pm\infty=0$, $a\cdot\infty=\infty$ if $a>0$ and $a\cdot\infty=-\infty$ if $a<0$, $0\cdot(\pm\infty)=0$, $\infty+\infty=\infty$, $-\infty-\infty=-\infty$.


\section{Stationary and ergodic sequences}
\label{sec2}

The aim of this section is to relax the idd assumption in Theorem \ref{EKM}. More precisely, we will show that the conclusion of this theorem will remain unchanged if the condition that $(X_n,n\ge 1)$ is an~iid sequence is replaced by a~weaker one that $(X_n,n\ge 1)$ forms a~strictly stationary and ergodic process.

\begin{tw}\label{l3}
Let $\mathbb{X}=(X_n;n\ge 1)$ be a strictly stationary and ergodic sequence of rv's with any cdf~$F$ and let $(k_n;n\ge 1)$ be a~sequence of  integers satisfying~\eqref{warK}. Then~\eqref{zbieznosc} holds.
\end{tw}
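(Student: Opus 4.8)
The plan is to reduce the almost sure convergence of the order statistics $X_{k_n:n}$ to a statement about empirical distribution functions, and then invoke the ergodic theorem. Consider the case $\lambda=0$, so $X_{k_n:n}$ should converge a.s.\ to $\gamma_0^{X_1}$; the case $\lambda=1$ is symmetric (apply the $\lambda=0$ argument to $(-X_n)$ and the indices $n-k_n+1$). For a fixed real level $x$, introduce the empirical counting function $F_n(x)=\frac1n\sum_{i=1}^n I(X_i\le x)$. The key combinatorial identity is that $X_{k_n:n}\le x$ iff at least $k_n$ of the first $n$ observations are $\le x$, i.e.\ iff $nF_n(x)\ge k_n$, equivalently $F_n(x)\ge k_n/n$. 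Hence the event $\{X_{k_n:n}>x\}$ is exactly $\{F_n(x)<k_n/n\}$.

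The main tool is Birkhoff's pointwise ergodic theorem applied to the stationary ergodic sequence $(I(X_i\le x),i\ge1)$: for each fixed $x$, $F_n(x)\to F(x)$ almost surely. Now fix $x<\gamma_0^{X_1}$. Then $F(x)=0$ by definition of the left endpoint, so $F_n(x)\to0$ a.s. Since $k_n/n\to\lambda=0$, for such $x$ one cannot immediately compare $F_n(x)$ with $k_n/n$ as both tend to $0$; instead I would argue directly. Actually, a cleaner route: fix $x<\gamma_0^{X_1}$; then $F(x)=0$ means $\mathbb P(X_1\le x)=0$, so by stationarity $\mathbb P(\exists\, i\ge1:\,X_i\le x)=0$, hence almost surely $X_i>x$ for all $i$, and in particular $X_{k_n:n}\ge X_{1:n}>x$ for every $n$. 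Therefore $\liminf_n X_{k_n:n}\ge x$ a.s.\ for every $x<\gamma_0^{X_1}$, and taking a sequence $x_m\uparrow\gamma_0^{X_1}$ gives $\liminf_n X_{k_n:n}\ge\gamma_0^{X_1}$ a.s. (If $\gamma_0^{X_1}=-\infty$ this half is vacuous.)

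For the reverse inequality, fix $x>\gamma_0^{X_1}$, so $F(x)>0$. By the ergodic theorem $F_n(x)\to F(x)>0$ a.s.; since $k_n/n\to0<F(x)$, eventually $F_n(x)\ge k_n/n$, i.e.\ eventually $X_{k_n:n}\le x$. Hence $\limsup_n X_{k_n:n}\le x$ a.s.\ for every $x>\gamma_0^{X_1}$, and letting $x\downarrow\gamma_0^{X_1}$ along a countable sequence yields $\limsup_n X_{k_n:n}\le\gamma_0^{X_1}$ a.s. Combining the two bounds gives $X_{k_n:n}\to\gamma_0^{X_1}$ a.s. Throughout, countability of the chosen sequences $x_m$ ensures that the exceptional null sets can be unioned into a single null set; this, together with correctly invoking ergodicity (rather than mere stationarity) to get the almost sure limit $F_n(x)\to F(x)$, is the only point requiring care. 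The case $\lambda=1$ follows by the symmetry $X_{k_n:n}=-(-X)_{n-k_n+1:n}$ together with $n-k_n\to\infty$ being unnecessary here — only $(n-k_n)/n\to0$ is used — and $\gamma_1^{X_1}=-\gamma_0^{-X_1}$.

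The step I expect to be the main obstacle is not any single estimate but making sure the ergodicity hypothesis is used in the right place: Birkhoff's theorem applied to the indicator process gives convergence of $F_n(x)$ to a limit, but that limit is a priori the conditional expectation $\mathbb E[I(X_1\le x)\mid\mathcal I]$ with respect to the invariant $\sigma$-field $\mathcal I$, and it is precisely ergodicity that collapses this to the constant $F(x)$. Without ergodicity one only gets convergence to that random quantity, which is exactly what motivates the conditional-endpoint machinery of Sections 3–4; here, though, ergodicity lets the classical argument go through essentially verbatim.
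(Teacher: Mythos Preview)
Your proof is correct and follows essentially the same route as the paper's: reduce to $\lambda=0$, use the equivalence $\{X_{k_n:n}\le x\}=\{\sum_{i=1}^n I(X_i\le x)\ge k_n\}$, apply Birkhoff's ergodic theorem to the indicator sequence to get $F_n(x)\to F(x)>0$ for $x>\gamma_0^{X_1}$, compare with $k_n/n\to 0$, and pass to the limit along a countable sequence $x\downarrow\gamma_0^{X_1}$. The paper handles the lower bound in one line (simply $X_{k_n:n}\ge\gamma_0^{X_1}$ a.s.\ since each $X_i\ge\gamma_0^{X_1}$ a.s.), which is the same content as your $\liminf$ argument stated more compactly.
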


\begin{proof}
We assume that $\lambda=0$ since the case $\lambda=1$ can be easily transformed to the former by considering $(-X_n,n\ge 1)$ instead of~$(X_n,n\ge 1)$.

First note that, for all $n\ge 1$,  $X_{k_n:n}\ge \gamma_0^{X_1}$ a.s. Therefore we are reduced to showing that 
$\limsup_{n\to\infty}X_{k_n:n}\le\gamma_0^{X_1}$ a.s.

Define $d_m=-m$ if $\gamma_0^{X_1}=-\infty$ and $d_m=\gamma_0^{X_1}+\frac{1}{m}$ otherwise. Fix $m\ge 1$. By the assumption that $\mathbb{X}$ is strictly stationary and ergodic, the sequence $(I(X_n\le d_m),n\ge 1)$ is so as well.
This is a simple consequence of Proposition 2.10 of Bradley~\cite{brad2007}. The classic strong ergodic theorem 
(see, for example, Grimmet and Stirzaker (\cite{grim2004}, Chapter 9.5) gives, as $n\to\infty$,
\begin{equation}\label{dtw1}
\sum_{i=1}^{n}I(X_i\le d_m)/n\xrightarrow{a.s.}\mathbb{E}(I(X_1\le d_m))=\mathbb{P}(X_1\le d_m)=F(d_m)>0.
\end{equation}
Since by assumption $k_n/n\to 0$, we get
\[
\frac{\sum_{i=1}^{n}I(X_i\le d_m)}{n}-\frac{k_n}{n}\xrightarrow{a.s.}F(d_m)>0\textrm{ as }n\to\infty
\]
and therefore
\begin{align*}
\mathbb{P}(X_{k_n:n}\le d_m\textrm{ for all large }n)=
\mathbb{P}\Big(\sum_{i=1}^{n}I(X_i\le d_m)\ge k_n\textrm{ for all large }n\Big)\\
=\mathbb{P}\Big(\frac{\sum_{i=1}^{n}I(X_i\le d_m)}{n}-\frac{k_n}{n}\ge 0\textrm{ for all large }n\Big)=1,
\end{align*}
which means that $\limsup_{n\to\infty}X_{k_n:n}\le d_m$ a.s. 
Letting $m\to\infty$ and using the countability of  $\mathbb{N}$ yields $\limsup_{n\to\infty}X_{k_n:n}\le\gamma_0^{X_1}$. This completes the proof.
\end{proof}
It is worth emphasizing that Theorem~\ref{l3} applies to all strictly stationary and ergodic sequences of rv's -- in particular no restriction is imposed on the cdf $F$ of $X_i$. The class of strictly stationary and ergodic processes is very broad. It includes, for example, the family of linear processes, that is processes defined by
\[
X_n=\sum_{i=-\infty}^{\infty}a_i\varepsilon_{n-i},\ n\ge 1,
\]
where $\varepsilon_k$, $k\in\mathbb{Z}$, are iid rv's and $a_i$, $i\in\mathbb{Z}$, are real coefficients such that $X_n$ exists almost surely. The family of linear processes covers, among others, all stationary autoregressive-moving average processes and all Gaussian processes with absolutely continuous spectrum.

\medskip
In the proof of Theorem~\ref{l3} we needed ergodicity of $\mathbb{X}$ only to show that this implies that of the sequences $(I(X_n\le d_m),n\ge 1)$, $m\geq 1$. This observation leads to the following result.
\begin{tw}
Theorem~\ref{l3} is still true if we replace the assumption that $\mathbb{X}=(X_n;n\ge 1)$ is ergodic by the condition that, for every $x$ belonging to the support of $X_1$,
\begin{equation}\label{autoC}
\sum_{i=1}^{\infty}c_x(i)/i<\infty,
\end{equation}
where $c_x(i)=\mathbb{P}(X_1\le x, X_{1+i}\le x)-\mathbb{P}(X_1\le x)^2$, $i\ge 1$, is the autocovariance function of the process $(I(X_n\le x),n\ge 1)$.
\end{tw}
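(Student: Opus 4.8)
The plan is to re-run the proof of Theorem~\ref{l3} almost word for word. As the paragraph just before the statement points out, ergodicity of $\mathbb{X}$ is used in that proof for one thing only: to obtain, for each $m$, the conclusion $\frac{1}{n}\sum_{i=1}^{n}I(X_i\le d_m)\xrightarrow{a.s.}F(d_m)$ of the strong ergodic theorem applied to the stationary sequence $(I(X_n\le d_m),n\ge1)$. So it suffices to check that \eqref{autoC} forces this same limit. There is one minor bookkeeping issue: the levels $d_m=\gamma_0^{X_1}+1/m$ (or $d_m=-m$) used there need not lie in the support of $X_1$, whereas \eqref{autoC} is assumed only at support points. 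This is harmless. For any $d$ with $F(d)>0$ set $x_d:=\sup\{y\le d:\ y\text{ lies in the support of }X_1\}$; the set is nonempty, $x_d$ lies in the support (a closed set), the interval $(x_d,d]$ is disjoint from the support, so $\mathbb{P}(X_1\in(x_d,d])=0$, whence $\{X_1\le d\}=\{X_1\le x_d\}$ up to a null set, $F(x_d)=F(d)>0$, and $c_d(i)=c_{x_d}(i)$ for every $i$. Thus the whole problem reduces to one strong law: if $(Y_n,n\ge1)$ is a strictly stationary $\{0,1\}$-valued sequence with $\mu:=\mathbb{E}Y_1$ and autocovariances $c(k):=\mathrm{Cov}(Y_1,Y_{1+k})$ satisfying $\sum_{k\ge1}|c(k)|/k<\infty$, then $\frac{1}{n}\sum_{i=1}^{n}Y_i\xrightarrow{a.s.}\mu$; we apply it with $Y_n=I(X_n\le x_{d_m})$.

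I would prove this strong law by the classical subsequence argument. Write $S_n=\sum_{i=1}^{n}Y_i$ and $\sigma_n^2=\mathrm{Var}(S_n)=nc(0)+2\sum_{k=1}^{n-1}(n-k)c(k)$, so that $\sigma_n^2/n^2=c(0)/n+\frac{2}{n}\sum_{k=1}^{n-1}(1-k/n)c(k)$. Fix $\theta>1$ and let $n_j=\lceil\theta^{j}\rceil$. The key step is the estimate $\sum_{j\ge1}\sigma_{n_j}^2/n_j^2<\infty$: the contribution of the terms $c(0)/n_j$ is at most $c(0)\sum_{j}\theta^{-j}$, and for the double sum that remains one interchanges the order of summation and uses that for each $k\ge1$ one has $\sum_{j:\,n_j>k}n_j^{-1}(1-k/n_j)\le\sum_{j:\,n_j>k}\theta^{-j}\le C_\theta/k$, whence the double sum is bounded by $C_\theta\sum_{k\ge1}|c(k)|/k<\infty$. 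Chebyshev's inequality together with the Borel--Cantelli lemma then give $S_{n_j}/n_j\xrightarrow{a.s.}\mu$. Since $0\le Y_i\le1$ the sequence $(S_n)$ is non-decreasing, so for $n_j\le n\le n_{j+1}$ we have $\frac{n_j}{n_{j+1}}\cdot\frac{S_{n_j}}{n_j}\le\frac{S_n}{n}\le\frac{n_{j+1}}{n_j}\cdot\frac{S_{n_{j+1}}}{n_{j+1}}$; as $n_{j+1}/n_j\to\theta$ this yields $\mu/\theta\le\liminf_n S_n/n\le\limsup_n S_n/n\le\theta\mu$ almost surely, and letting $\theta\downarrow1$ through $\theta=1+1/\ell$ and intersecting the corresponding probability-one events gives $S_n/n\xrightarrow{a.s.}\mu$. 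Feeding this into the proof scheme of Theorem~\ref{l3} produces \eqref{zbieznosc}.

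The one genuinely substantive step is the estimate $\sum_j\sigma_{n_j}^2/n_j^2<\infty$; everything else (the reduction to support points, the Chebyshev/Borel--Cantelli step, the monotone gap-filling, and the literal re-run of the argument of Theorem~\ref{l3}) is routine. The point to watch is the reading of \eqref{autoC}: the interchange of summation above uses $\sum_k|c(k)|/k<\infty$, which coincides with \eqref{autoC} when the autocovariances are of constant sign and which I would take as the intended meaning of \eqref{autoC}; if one insists on only the conditional convergence of $\sum_k c(k)/k$ with $c(k)$ changing sign, the estimate must instead be argued by summation by parts, but the conclusion is unchanged. Alternatively, the strong law for $(Y_n)$ can be quoted directly from the theory of strong laws for second-order stationary sequences.
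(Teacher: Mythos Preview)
Your proposal is correct and follows the same high-level route as the paper: both recognise that the only place ergodicity was used in Theorem~\ref{l3} is to obtain the strong law $\frac{1}{n}\sum_{i=1}^n I(X_i\le d_m)\xrightarrow{a.s.}F(d_m)$, and both supply that strong law under~\eqref{autoC} and then rerun the argument verbatim. The paper's proof, however, is essentially one line: it cites Dembi\'nska~\cite{dem2012} for the fact that \eqref{autoC} implies $\frac{1}{n}\sum_{i=1}^n I(X_i\le x)\xrightarrow{a.s.}F(x)$, and says nothing further. Your version is more self-contained in two respects. First, you make explicit the reduction from the levels $d_m$ (which need not be support points) to genuine support points $x_{d_m}$ via the observation that $(x_d,d]$ carries no mass; the paper is silent on this. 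Second, rather than quoting the strong law, you prove it by the standard Chebyshev--along--a--geometric--subsequence argument with monotone gap-filling. You also flag the absolute-versus-conditional convergence reading of~\eqref{autoC}, which the paper does not discuss. So your argument is longer but stands on its own, while the paper's buys brevity by outsourcing the one substantive step to an external reference.
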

\begin{proof}
Dembi\'nska~\cite{dem2012} showed that~\eqref{autoC} gives $\sum_{i=1}^{n}I(X_i\le x)/n\xrightarrow{a.s.}F(x)$ as $n\to\infty$. Thus we have~\eqref{dtw1}. The rest of the proof runs as before.
\end{proof}

We have shown that the assumption that $\mathbb{X}$ is ergodic can be replaced by another one and the conclusion of Theorem~\ref{l3} will remain unchanged. Yet, this assumption cannot be completely dropped as the following example shows.
\begin{ex}
\label{ex1}
Let X be some non-degenerate rv and $X_n=X$ for all $n\ge 1$. Then the sequence $(X_n,n\ge 1)$ is strictly stationary but
\[
X_{k_n:n}=X\xrightarrow{a.s.}X\neq\gamma_0^{X}.
\]
\end{ex}
We see that, if we assume only strictly stationarity of~$\mathbb{X}$, then the almost sure limit of $X_{k_n:n}$ need not to be a~constant -- it can be a~non-degenerate rv. The rest of the paper is devoted to the proof of almost sure existence of $\lim_{n\to\infty}X_{k_n:n}$ under the single assumption that $\mathbb{X}$ is strictly stationary and to the description of the distribution of the limiting rv.


\section{Conditional left and right endpoints of the support}
\label{sec3}

Tomkins \cite{tomkins} proposed a~definition of conditional median. This definition has been extended to other quantiles as follows.

\begin{df}
Suppose $X$ is an~rv on a~probability space $(\Omega,\mathcal{F},\mathbb{P})$, $\mathcal{G}\subseteq\mathcal{F}$ is a~sigma-field and $\lambda\in(0,1)$. Then an~rv~$Q_{\lambda}$ with the following properties
\begin{description}
    \item[(i)]
    $Q_{\lambda}$ is $\mathcal{G}$-measurable,
    \item[(ii)]
    $\mathbb{P}(X\ge Q_{\lambda}|\mathcal{G})\ge 1-\lambda$ a.s. and $\mathbb{P}(X\le Q_{\lambda}|\mathcal{G})\ge\lambda$ a.s.
\end{description}\
is called a~conditional $\lambda$th quantile of~$X$ with respect to~$\mathcal{G}$.
\end{df}
Using the concept of conditional quantile, Dembi\'nska \cite{dem2014} described the distribution of the rv appearing as the almost sure limit of central order statistics from stationary processes. The aim of the present paper is to give a~corresponding result for extreme and intermediate order statistics. To do this, we need an extension of the notion of conditional quantiles to the case of $\lambda=0$ and $\lambda=1$. This extension leads us to new concepts of conditional left and right endpoints of the support of an~rv. Before we introduce  these concepts, we first establish some properties that will guarantee the correctness of the proposed definitions.

\begin{lem}\label{l1}
Let $\mathcal{G}\subseteq\mathcal{F}$ be a sigma-field. Suppose $(X_n, n\geq 1)$ and $(Q_n,  n\geq~1)$ are sequences of rv's and extended rv's from probability space $(\Omega,\mathcal{F},\mathbb{P})$, respectively, such that
\begin{description}
\item[(i)]
$Q_n$ is $\mathcal{G}$-measurable and $\mathbb{P}(X_n\ge Q_n|\mathcal{G})=1$ a.s. for all $n\geq 1$, and
\item[(ii)]
there exist an rv~$X$ and an extended rv~$Q$ such that $X_n\xrightarrow{a.s.} X$  and $Q_n\xrightarrow{a.s.} Q$.
\end{description}
Then $\mathbb{P}(X\ge Q|\mathcal{G})=1$ a.s. 
\end{lem}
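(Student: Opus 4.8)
The plan is to show that the event $\{\mathbb{P}(X\ge Q\mid\mathcal{G})=1\}$ has probability one by reducing the claim to a statement about the conditional distribution and then passing to the limit. Since $\mathbb{P}(X_n\ge Q_n\mid\mathcal{G})=1$ a.s.\ for every $n$, for each fixed $n$ and each rational $q$ we have $\mathbb{P}(X_n< q\mid\mathcal{G})\le \mathbb{P}(Q_n>q\mid\mathcal{G})$ a.s.; more to the point, I would like to compare $X$ with $Q$ directly. The cleanest route is: it suffices to prove, for every $\varepsilon>0$ and every rational $q$, that
\[
\mathbb{P}\bigl(\{X<q-\varepsilon\}\cap\{Q>q\}\bigr)=0,
\]
because the event $\{X<Q\}$ is the countable union over rationals $q$ and over $\varepsilon=1/k$ of the events $\{X<q-\varepsilon<q<Q\}$... actually $\{X<Q\}=\bigcup_{q\in\mathbb{Q}}\{X<q<Q\}$ already, and then one further intersects with $\{|X-q|>\varepsilon\}$ to get room; so $\mathbb{P}(X<Q)=0$ follows from $\mathbb{P}(X<q-\varepsilon,\ Q>q)=0$ for all rational $q$ and all $\varepsilon>0$. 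Once $\mathbb{P}(X<Q)=0$, i.e.\ $\mathbb{P}(X\ge Q)=1$, one upgrades this to $\mathbb{P}(X\ge Q\mid\mathcal{G})=1$ a.s.\ by noting that $\mathbb{E}\bigl[\mathbb{P}(X<Q\mid\mathcal{G})\bigr]=\mathbb{P}(X<Q)=0$ forces the nonnegative $\mathcal{G}$-measurable rv $\mathbb{P}(X<Q\mid\mathcal{G})$ to vanish a.s.

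For the key estimate, fix a rational $q$ and $\varepsilon>0$. On the event $A:=\{X<q-\varepsilon\}\cap\{Q>q\}$, since $X_n\to X$ and $Q_n\to Q$ a.s., for a.e.\ $\omega\in A$ there is $N(\omega)$ with $X_n(\omega)<q-\varepsilon/2$ and $Q_n(\omega)>q$, hence $X_n(\omega)<Q_n(\omega)$, for all $n\ge N(\omega)$. Thus $A\subseteq \liminf_n\{X_n<Q_n\}$ up to a null set. But $\mathbb{P}(X_n<Q_n)=\mathbb{E}\bigl[\mathbb{P}(X_n<Q_n\mid\mathcal{G})\bigr]=0$ for every $n$ by hypothesis (i); therefore $\mathbb{P}\bigl(\liminf_n\{X_n<Q_n\}\bigr)\le \liminf_n\mathbb{P}(X_n<Q_n)=0$ by Fatou's lemma for sets, and so $\mathbb{P}(A)=0$, as required.

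I expect the only genuinely delicate point to be the treatment of $Q$ being an \emph{extended} rv, possibly taking the values $\pm\infty$: one must make sure the inequalities $X_n<Q_n$, $X<Q$ and the limit passages are interpreted consistently in $[-\infty,\infty]$ and that the $\mathcal{G}$-measurable versions of the conditional probabilities are well defined. This is handled by the stated conventions on arithmetic in $[-\infty,\infty]$: on $\{Q=+\infty\}$ every finite $X$ satisfies $X<Q$ harmlessly (and is covered by the rational cutoff argument, letting $q\to\infty$), while on $\{Q=-\infty\}$ one has $X\ge Q$ trivially; the case $X=\pm\infty$ does not arise since $X$ is an ordinary rv. A second, more routine, point is simply verifying that the conditional probability $\mathbb{P}(X\ge Q\mid\mathcal{G})$ is well defined for the extended rv $Q$, which follows from approximating $\{X\ge Q\}$ by $\{X\ge Q\wedge m\}\cup\{Q=+\infty,\ \text{(excluded)}\}$ or directly from $\{X\ge Q\}=\bigcap_{q\in\mathbb{Q}}\bigl(\{Q\le q\}\cup\{X\ge q\}\bigr)$ together with monotone convergence; I would relegate this to a sentence. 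The heart of the argument is the Fatou-for-sets step in the previous paragraph.
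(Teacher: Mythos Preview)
Your argument is correct. Both your proof and the paper's hinge on the same pointwise observation---if $X_n\to X$, $Q_n\to Q$, and $X_n\ge Q_n$ along a subsequence (or eventually), then $X\ge Q$---but they package it differently. The paper expresses this as the indicator inequality $I(X\ge Q)\ge\limsup_n I(X_n\ge Q_n)$ a.s.\ and then applies the reverse Fatou lemma for \emph{conditional} expectations in one stroke:
\[
\mathbb{P}(X\ge Q\mid\mathcal{G})\ge \limsup_n \mathbb{P}(X_n\ge Q_n\mid\mathcal{G})=1\quad\text{a.s.}
\]
You instead first establish the unconditional statement $\mathbb{P}(X<Q)=0$ and then upgrade via $\mathbb{E}[\mathbb{P}(X<Q\mid\mathcal{G})]=0$. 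Your route avoids conditional Fatou, which is a mild gain in elementarity, but the rational-$q$/$\varepsilon$ decomposition is more machinery than you need: once you observe that $\mathbb{P}(X_n<Q_n)=0$ for every $n$ (by taking expectations in hypothesis~(i)), the countable union $\bigcup_n\{X_n<Q_n\}$ is already null, so a.s.\ $X_n\ge Q_n$ for \emph{all} $n$ simultaneously, and passing to the limit gives $X\ge Q$ a.s.\ directly---no need to slice by rationals or invoke Fatou for sets. Your handling of the extended-value cases is fine; in fact, under the hypotheses one has $Q_n\le X_n$ a.s.\ with $X_n$ finite, so $Q\le X<+\infty$ a.s.\ and the $\{Q=+\infty\}$ worry is vacuous.
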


\begin{proof}
Let  $X_n\xrightarrow{a.s.} X$  and $Q_n\xrightarrow{a.s.} Q$. Then
\begin{equation}
\label{wzor1}
I(X\ge Q)\ge \limsup_{n\to\infty}I(X_n\ge Q_n) \; \textrm{ a.s.},
\end{equation}
because otherwise there would exist a set $\Omega_0\subseteq\Omega$  of positive probability such that 
\begin{equation}
\label{lem3.1wd1}
I(X(\omega)\ge Q(\omega))=0  \textrm{ and } \limsup_{n\to\infty}I(X_n(\omega)\ge Q_n(\omega))=1  \textrm{ for  } \omega\in\Omega_0.
\end{equation}
 If so, for  $\omega\in\Omega_0$, we would have  $X_n(\omega)\ge Q_n(\omega)$  for infinitely many $n$.  By assumption (ii) it would give $X(\omega)\ge Q(\omega)$  for almost all $\omega\in\Omega_0$. Therefore, in this case, $I(X(\omega)\ge Q(\omega))=1$  for almost all $\omega\in\Omega_0$, which shows that (\ref{lem3.1wd1}) is impossible. Thus (\ref{wzor1}) is proved.

By $\eqref{wzor1}$ and Fatou's Lemma for conditional expectation, we get
\begin{align*}
\mathbb{E}(I(X\ge Q)|\mathcal{G})\ge \mathbb{E}(\limsup_{n\to\infty}I(X_n\ge Q_n)|\mathcal{G})\ge \limsup_{n\to\infty} \mathbb{E}(I(X_n\ge Q_n)|\mathcal{G})\textrm{ a.s.} 
\end{align*}
Hence
\begin{align*}
\mathbb{P}(X\ge Q|\mathcal{G})\ge
\limsup_{n\to\infty}\mathbb{P}(X_n\ge Q_n|\mathcal{G})=\limsup_{n\to\infty}1=1\textrm{ a.s.},
\end{align*}
and the lemma follows. 
\end{proof}

\begin{tw}\label{tw.1}
    For any rv $X$ and any sigma-field $\mathcal{G}\subseteq\mathcal{F}$ there exists an~extended rv $\overline{Q}_0$ having the following properties 
    \begin{description}
    \item[(i)]
    $\overline{Q}_0$ is $\mathcal{G}$-measurable,
    \item[(ii)]
    $\mathbb{P}(X\ge \overline{Q}_0|\mathcal{G})=1$ a.s.,
    \item[(iii)]
    for any $\mathcal{G}$-measurable extended rv~$Q_0$ such that $\mathbb{P}(X\ge Q_0|\mathcal{G})=1$ a.s., we have $Q_0\le \overline{Q}_0$ a.s.
    \end{description}
\end{tw}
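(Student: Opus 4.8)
The plan is to construct $\overline{Q}_0$ as the essential supremum of the family of all $\mathcal{G}$-measurable extended rv's $Q_0$ satisfying $\mathbb{P}(X\ge Q_0|\mathcal{G})=1$ a.s. First I would observe that this family is nonempty, since the constant $-\infty$ trivially belongs to it. Denote this family by $\mathcal{Q}$, and set $\overline{Q}_0:=\esssuptT$ -- more precisely, $\overline{Q}_0$ is the essential supremum of $\mathcal{Q}$ in the sense recalled in the Appendix (taken over the $\mathcal{G}$-measurable extended rv's, so that the resulting essential supremum is again $\mathcal{G}$-measurable). The existence property of the essential supremum then immediately gives property (i) ($\mathcal{G}$-measurability) and property (iii) (it is an a.s. upper bound for every member of $\mathcal{Q}$). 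The only real content is property (ii): that this supremum itself still satisfies $\mathbb{P}(X\ge \overline{Q}_0|\mathcal{G})=1$ a.s.

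For property (ii) I would use the standard fact that the essential supremum of a family can be realized as the pointwise (increasing) limit of a countable subfamily: there exist $Q^{(1)}_0,Q^{(2)}_0,\ldots\in\mathcal{Q}$ with $\overline{Q}_0=\sup_{k\ge 1}Q^{(k)}_0$ a.s. Replacing $Q^{(k)}_0$ by $\max(Q^{(1)}_0,\ldots,Q^{(k)}_0)$, which is still $\mathcal{G}$-measurable, I would arrange that the sequence is nondecreasing; moreover a finite maximum of members of $\mathcal{Q}$ is again in $\mathcal{Q}$, because $\mathbb{P}(X\ge \max(Q^{(1)}_0,\ldots,Q^{(k)}_0)\,|\,\mathcal{G}) = \mathbb{P}\big(\bigcap_{j\le k}\{X\ge Q^{(j)}_0\}\,\big|\,\mathcal{G}\big)=1$ a.s. (a finite intersection of a.s.-sure conditional events is a.s. sure). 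Thus we have a nondecreasing sequence $(Q^{(k)}_0)$ of members of $\mathcal{Q}$ with $Q^{(k)}_0\xrightarrow{a.s.}\overline{Q}_0$. Taking $X_n:=X$ for all $n$ (so $X_n\xrightarrow{a.s.}X$) and $Q_n:=Q^{(n)}_0$, the hypotheses of Lemma \ref{l1} are met, and its conclusion is exactly $\mathbb{P}(X\ge\overline{Q}_0|\mathcal{G})=1$ a.s. This establishes (ii) and completes the proof.

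The main obstacle -- and the reason Lemma \ref{l1} was proved first -- is precisely this passage to the limit in (ii): the naive argument "each $Q^{(k)}_0\le X$ conditionally a.s., hence the sup is too" fails because one is taking a supremum over infinitely many functions, and each conditional-inequality statement holds only off its own null set, whose union need not be null unless one is careful. Lemma \ref{l1} packages the correct Fatou-type argument that circumvents this. A secondary technical point to check is that the essential supremum can be taken within the class of $\mathcal{G}$-measurable extended rv's and remains $\mathcal{G}$-measurable; this follows from the countable-realization of the essential supremum together with $\mathcal{G}$-measurability of each $Q^{(k)}_0$ and of countable suprema. I would also note that nothing here requires $X$ to be integrable or even real-valued in any essential way beyond being a genuine rv, and that the symmetric statement for a right endpoint $\overline{Q}_1$ (with $\le$ in place of $\ge$ and essential infimum in place of essential supremum) follows by applying the result to $-X$.
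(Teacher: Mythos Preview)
Your proposal is correct and follows essentially the same route as the paper: define $\overline{Q}_0$ as the essential supremum of the family $\mathcal{Q}$, use the countable realization of the essential supremum, pass to a nondecreasing sequence via finite maxima (which remain in $\mathcal{Q}$), and then invoke Lemma~\ref{l1} to obtain property~(ii). The only cosmetic differences are that the paper uses $\gamma_0^{X}$ rather than $-\infty$ to witness nonemptiness, and verifies closure under pairwise maxima by splitting on the $\mathcal{G}$-measurable events $[Q_t\ge Q_s]$ and $[Q_s>Q_t]$ rather than via the intersection argument you give.
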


\begin{proof}
By $\{Q_t,t\in T\}$ let us denote the set of~all $\mathcal{G}$-measurable extended rv's $Q_t$ satisfying $\mathbb{P}(X\ge Q_t|\mathcal{G})=1$ a.s. Note that $\{Q_t,t\in T\}$ is non-empty since $\gamma_0^{X}$ belongs to this set.

Define $\overline{Q}_0$ to be the essential supremum of $\{Q_t,t\in T\}$:
\begin{equation}
\label{wzor2}
\overline{Q}_0:=\esssuptT Q_t.
\end{equation}
It is known that $\textrm{ess} \sup_{t\in T} Q_t$ exists and 
\begin{equation}
\label{wzor3}
\esssuptT Q_t=\sup_{t\in C} Q_t,
\end{equation}
where $C$ is some countable subset of~$T$; see Theorem A.1 in the Appendix.

We will show that $\overline{Q}_0$ given by \eqref{wzor2} satisfies conditions (i)-(iii) of Theorem~\ref{tw.1}. Requirement (iii) is an immediate consequence of the definition of essential supremum so it suffices to prove (i) and (ii). To this end, let $Q_{t}$ and $Q_{s}$ be two extended rv's belonging to the set $\{Q_t,t\in T\}$. Then $M:=\max(Q_{t},Q_{s})$ also belongs to $\{Q_t,t\in T\}$. Indeed, it is obvious that $M$ is $\mathcal{G}$-measurable. Moreover
\[
\mathbb{P}(X\ge\max(Q_t,Q_s)|\mathcal{G})=\mathbb{P}(X\ge Q_t|\mathcal{G})=1\textrm{ a.s.}
\]
on the event $[Q_t\ge Q_s]$ and
\[
\mathbb{P}(X\ge\max(Q_t,Q_s)|\mathcal{G})=\mathbb{P}(X\ge Q_s|\mathcal{G})=1\textrm{ a.s.}
\]
on the event $[Q_s> Q_t]$. Therefore $\mathbb{P}(X\ge\max(Q_t,Q_s)|\mathcal{G})=1$ a.s.
Now let $R_n=\max_{k\le n}Q_{t_k}$, $n\ge 1$, where $\{t_1,t_2,\ldots\}=C$ and $C$ is a countable subset of $T$  satisfying \eqref{wzor3}.   By induction, for any $n\ge 1$, $R_n$ is $\mathcal{G}$-measurable and  $\mathbb{P}(X\ge R_n|\mathcal{G})=1$ a.s. Moreover $R_n\uparrow \overline{Q}_0$ a.s. Hence Lemma~\ref{l1} gives \linebreak  $\mathbb{P}(X\ge\overline{Q}_0|\mathcal{G})=1$ a.s. Relation  \eqref{wzor3} makes it obvious that $\overline{Q}_0$ is $\mathcal{G}$-measurable and the proof is complete.
\end{proof}

\begin{df}
Suppose $X$ is an rv and $\mathcal{G}$ is a~sigma-field with $\mathcal{G}\subseteq\mathcal{F}$. Then the extended rv $\overline{Q}_0$ from Theorem~\ref{tw.1} is called a~conditional left endpoint of the support of rv $X$ with respect to~$\mathcal{G}$ and will be denoted by~$\gamma_0(X|\mathcal{G})$.
\end{df}

Note that $\gamma_0(X|\mathcal{G})$ is not necessarily uniquely determined but any two versions of $\gamma_0(X|\mathcal{G})$ agree a.s. A~version of $\gamma_0(X|\mathcal{G})$ can be also viewed as a~conditional quantile of order $\lambda=0$ of the rv $X$ with respect to $\mathcal{G}$.

A~conditional right endpoint of the support, which can be viewed as a~conditional quantile of order~$\lambda=1$, is defined in an analogous way.

\begin{df}
    Suppose $X$ is an~rv and $\mathcal{G}$ is a~sigma-field with $\mathcal{G}\subseteq\mathcal{F}$. The conditional right endpoint of~the support of~$X$ given $\mathcal{G}$, denoted by $\gamma_1(X|\mathcal{G})$, is defined as an~extended rv $\underline{Q}_1$ with the following properties 
    \begin{description}
    \item[(i)]
    $\underline{Q}_1$ is $\mathcal{G}$-measurable,
    \item[(ii)]
    $\mathbb{P}(X\le \underline{Q}_1|\mathcal{G})=1$ a.s.,
    \item[(iii)]
    for any $\mathcal{G}$-measurable extended rv~$Q_1$ such that $\mathbb{P}(X\le Q_1|\mathcal{G})=1$ a.s., we have $Q_1\ge \underline{Q}_1$ a.s.
    \end{description}
\end{df}
Replacing $X$ by $-X$ in Theorem~\ref{tw.1}, we immediately obtain that for any rv~$X$ and any sigma-field $\mathcal{G}\subseteq\mathcal{F}$ there exists an extended  rv $\underline{Q}_1$ satisfying conditions (i)-(iii) of~the above definition. Moreover this rv is almost surely unique. It is also clear that for any rv~$X$ and any sigma-field $\mathcal{G}\subseteq\mathcal{F}$ we have
\begin{equation}\label{wzor4}
\gamma_1(X|\mathcal{G})=-\gamma_0(-X|\mathcal{G}).
\end{equation}
Relation~\eqref{wzor4} allows us to rewrite properties of~conditional left endpoints of~supports as that of conditional right endpoints of supports. Therefore in what follows we restrict our attention to properties of $\gamma_0(X|\mathcal{G})$.

\begin{tw}
\label{tw.2}
Let $X$ and $Y$ be rv's and $\mathcal{G}\subseteq\mathcal{F}$ be a~sigma-field. If~$Y$ is $\mathcal{G}$-measurable, then
\begin{description}
\item[(i)]
$\gamma_0(Y|\mathcal{G})=Y$ a.s.,
\item[(ii)]
$\gamma_0(X+Y|\mathcal{G})=\gamma_0(X|\mathcal{G})+Y$ a.s.,
\item[(iii)]
$\gamma_0(XY|\mathcal{G})=Y\gamma_0(X|\mathcal{G})$ a.s provided that $Y\ge 0$ a.s. or $\gamma_0(X|\mathcal{G})=X$ a.s.
\end{description}
\end{tw}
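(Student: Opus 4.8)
The plan is to prove the three properties in order, leaning on the characterization of $\gamma_0(X|\mathcal{G})$ as the almost surely largest $\mathcal{G}$-measurable extended rv $\overline Q_0$ with $\mathbb{P}(X \ge \overline Q_0 | \mathcal{G}) = 1$ a.s.\ (Theorem~\ref{tw.1}), together with the almost sure uniqueness of this object. For part (i), I would observe that when $X = Y$ is itself $\mathcal{G}$-measurable, the rv $Y$ trivially satisfies $\mathbb{P}(Y \ge Y | \mathcal{G}) = 1$ a.s.\ and is $\mathcal{G}$-measurable, so $Y \le \gamma_0(Y|\mathcal{G})$ a.s.; conversely, any $\mathcal{G}$-measurable $Q_0$ with $\mathbb{P}(Y \ge Q_0|\mathcal{G}) = 1$ a.s.\ must satisfy $Y \ge Q_0$ a.s.\ (since both are $\mathcal{G}$-measurable, the event $[Y \ge Q_0]$ is in $\mathcal{G}$ and its conditional probability being $1$ a.s.\ forces it to have probability $1$), in particular $\gamma_0(Y|\mathcal{G}) \le Y$ a.s.

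For part (ii), the strategy is to verify that $\gamma_0(X|\mathcal{G}) + Y$ has the three defining properties of $\gamma_0(X+Y|\mathcal{G})$ and then invoke uniqueness. It is $\mathcal{G}$-measurable since both summands are. For property (ii) of the definition, I would use that conditioning on $\mathcal{G}$ treats the $\mathcal{G}$-measurable rv $Y$ as a constant: $\mathbb{P}(X + Y \ge \gamma_0(X|\mathcal{G}) + Y \,|\, \mathcal{G}) = \mathbb{P}(X \ge \gamma_0(X|\mathcal{G}) \,|\, \mathcal{G}) = 1$ a.s. For maximality, given any $\mathcal{G}$-measurable $Q_0$ with $\mathbb{P}(X+Y \ge Q_0|\mathcal{G}) = 1$ a.s., the rv $Q_0 - Y$ is $\mathcal{G}$-measurable and satisfies $\mathbb{P}(X \ge Q_0 - Y|\mathcal{G}) = 1$ a.s., hence $Q_0 - Y \le \gamma_0(X|\mathcal{G})$ a.s., i.e.\ $Q_0 \le \gamma_0(X|\mathcal{G}) + Y$ a.s. One subtlety to note carefully is the arithmetic in $[-\infty,\infty]$: if $\gamma_0(X|\mathcal{G})$ is $\pm\infty$ on some $\mathcal{G}$-set, the identities $a \pm \infty = \pm\infty$ from the conventions in Section~\ref{sec1} keep everything consistent, and this should be remarked.

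For part (iii), I would again check that $Y\gamma_0(X|\mathcal{G})$ satisfies the three properties of $\gamma_0(XY|\mathcal{G})$. Under the hypothesis $Y \ge 0$ a.s., multiplying the a.s.\ inequality $X \ge \gamma_0(X|\mathcal{G})$ (valid $\mathbb{P}(\cdot|\mathcal{G})$-a.s.) by the nonnegative $\mathcal{G}$-measurable $Y$ preserves the direction, giving $\mathbb{P}(XY \ge Y\gamma_0(X|\mathcal{G})|\mathcal{G}) = 1$ a.s.; maximality follows by a case split on $[Y > 0]$ (divide through) and $[Y = 0]$ (both sides vanish, using $0\cdot(\pm\infty) = 0$). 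For the alternative hypothesis $\gamma_0(X|\mathcal{G}) = X$ a.s., part (i) tells us $X$ is (a.s.\ equal to) a $\mathcal{G}$-measurable rv, so $XY$ is $\mathcal{G}$-measurable and part (i) gives $\gamma_0(XY|\mathcal{G}) = XY = Y\gamma_0(X|\mathcal{G})$ a.s.

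\textbf{Main obstacle.} The routine mechanics are straightforward; the delicate points are the bookkeeping with the extended-real arithmetic (particularly in (ii) and in the $[Y=0]$ case of (iii), where one must be sure that $0 \cdot (\pm\infty) = 0$ makes the claimed identity literally correct) and, in (iii), the handling of the sign of $Y$ when dividing a conditional-probability-one inequality by a $\mathcal{G}$-measurable factor that may vanish on a nontrivial set. I would isolate the event $[Y=0] \in \mathcal{G}$ and argue on it and its complement separately to avoid any illegitimate division.
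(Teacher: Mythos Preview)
Your proposal is correct and follows essentially the same route as the paper for parts (i) and (ii), and for part (iii) under the hypothesis $Y\ge 0$ a.s. The one noteworthy difference is your treatment of the alternative hypothesis $\gamma_0(X|\mathcal{G})=X$ a.s.\ in (iii): the paper carries out the full event split on the sign of $Y$ (including an explicit computation on $[Y<0]$ showing $\mathbb{P}(X\le\gamma_0(X|\mathcal{G})|\mathcal{G})=\mathbb{P}(X=\gamma_0(X|\mathcal{G})|\mathcal{G})$ a.s.), whereas you short-circuit by observing that $X$ is a.s.\ equal to the $\mathcal{G}$-measurable $\gamma_0(X|\mathcal{G})$, so $XY$ is a.s.\ $\mathcal{G}$-measurable and part (i) applies directly. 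Your shortcut is cleaner and avoids the sign discussion entirely; the paper's approach has the mild advantage of making transparent exactly where the hypothesis is used on each $\mathcal{G}$-event, but both are valid.
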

\begin{proof}
To prove (i), let $Q_0$ be any $\mathcal{G}$-measurable extended rv satisfying \linebreak $\mathbb{P}(Y\ge Q_0|\mathcal{G})=1$ a.s. Since $\mathbb{P}(Y\ge Q_0|\mathcal{G})=I(Y\ge Q_0)$, it follows that $I(Y\ge Q_0)=1$ a.s. and hence that $Y\ge Q_0$ a.s. Therefore $\gamma_0(Y|\mathcal{G})=Y$ a.s. by the definition of conditional left endpoint of support.

For (ii) observe that $\gamma_0(X|\mathcal{G})+Y$ is $\mathcal{G}$-measurable and 
\[
\mathbb{P}(X+Y\ge \gamma_0(X|\mathcal{G})+Y|\mathcal{G})= \mathbb{P}(X\ge \gamma_0(X|\mathcal{G})|\mathcal{G})=1\textrm{ a.s.}
\]
Next, let $Q_0$ be any $\mathcal{G}$-measurable extended rv satisfying $\mathbb{P}(X+Y\ge Q_0|\mathcal{G})=~1$ a.s. Then $\mathbb{P}(X\ge Q_0-Y|\mathcal{G})=1$ a.s. and since $Q_0-Y$ is $\mathcal{G}$-measurable, by the definition of $\gamma_0(X|\mathcal{G})$, we get $Q_0-Y\le \gamma_0(X|\mathcal{G})$ a.s., which means that $Q_0\le \gamma_0(X|\mathcal{G})+Y$ a.s. Thus $\gamma_0(X|\mathcal{G})+Y$ is indeed a~version of~$\gamma_0(X+Y|\mathcal{G})$.

To prove (iii) note that, on the event $[Y=0]$,
\[
\mathbb{P}(XY\ge Y\gamma_0(X|\mathcal{G})|\mathcal{G})=
\mathbb{P}(0\ge 0)=1\textrm{ a.s.},
\]
on the event $[Y>0]$,
\[
\mathbb{P}(XY\ge Y\gamma_0(X|\mathcal{G})|\mathcal{G})=
\mathbb{P}(X\ge\gamma_0(X|\mathcal{G})|\mathcal{G})=1\textrm{ a.s.}
\]
and on the event $[Y<0]$,
\begin{align*}
&\mathbb{P}(XY\ge Y\gamma_0(X|\mathcal{G})|\mathcal{G})=
\mathbb{P}(X\le\gamma_0(X|\mathcal{G})|\mathcal{G}) & \\
&=1-\mathbb{P}(X\ge\gamma_0(X|\mathcal{G})|\mathcal{G})+ \mathbb{P}(X=\gamma_0(X|\mathcal{G})|\mathcal{G})
=\mathbb{P}(X=\gamma_0(X|\mathcal{G})|\mathcal{G})\textrm{ a.s.} &
\end{align*}
It follows that if $\gamma_0(X|\mathcal{G})=X$ a.s. or $Y\ge 0$ a.s. then 
$$\mathbb{P}(XY\ge Y\gamma_0(X|\mathcal{G})|\mathcal{G})=1  \textrm{ a.s.}$$
Now, suppose $Q_0$ is any $\mathcal{G}$-measurable extended rv such that  \linebreak $\mathbb{P}(XY\ge Q_0|\mathcal{G})=1$ a.s. Then
$Q_0\le Y\gamma_0(X|\mathcal{G})$ a.s. provided that $Y\ge 0$ a.s. or  $\gamma_0(X|\mathcal{G})=X$  a.s. Indeed,  on the event $[Y>0]$,
$$\mathbb{P}(X\ge Q_0/Y|\mathcal{G})=1 \textrm{ a.s.},$$
which, by the  $\mathcal{G}$-measurability of $Q_0/Y$ and the definition of $\gamma_0(X|\mathcal{G})$, shows that $Q_0/Y\le \gamma_0(X|\mathcal{G})$  a.s.  and hence that $Q_0\le Y\gamma_0(X|\mathcal{G})$ a.s. Next,  on the event $[Y=0]$ we have 
$1=\mathbb{P}(0\ge Q_0|\mathcal{G})=I(Q_0\geq 0)$ a.s., which gives $Q_0\geq 0=Y\gamma_0(X|\mathcal{G})$ a.s. Finally, if  $X=\gamma_0(X|\mathcal{G})$  a.s. then we get
$$\mathbb{P}(\gamma_0(X|\mathcal{G})Y\ge Q_0|\mathcal{G})=1 \textrm{ a.s.}$$
and the  $\mathcal{G}$-measurability of $\gamma_0(X|\mathcal{G})Y$ and $Q_0$  impiles
$$I(\gamma_0(X|\mathcal{G})Y\geq Q_0)=1  \textrm{ a.s.}$$
Thus again $Y\gamma_0(X|\mathcal{G})\geq Q_0$ a.s. as claimed.
\end{proof}

\begin{tw}
\label{tw.3.4}
Let $a\in\mathbb{R}$, $X$ and $Y$ be rv's and $\mathcal{G}\subseteq\mathcal{F}$ be a~sigma-field. Then
\begin{description}
\item[(i)]
$\gamma_0(a|\mathcal{G})=a$ a.s.,
\item[(ii)]
$\gamma_0(aX|\mathcal{G})=a\gamma_0(X|\mathcal{G})$ a.s.
provided that $a\ge 0$,
\item[(iii)]
$X\ge Y$ a.s. implies
$\gamma_0(X|\mathcal{G})\ge\gamma_0(Y|\mathcal{G})$ a.s.,
\item[(iv)]
$X\ge a$ a.s. implies
$\gamma_0(X|\mathcal{G})\ge a$ a.s.,
\item[(v)]
if $X$ is independent of~$\mathcal{G}$, then
$\gamma_0(X|\mathcal{G})=\gamma_0(X|\mathcal{G}_0)=\gamma_0^{X}$ a.s., where $\mathcal{G}_0=\{\emptyset,\Omega\}$, the trivial sigma-field.
\end{description}
\end{tw}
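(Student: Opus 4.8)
\textbf{Proof plan for Theorem~\ref{tw.3.4}.}

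The plan is to prove the five parts essentially in the order given, leaning on the already-established Theorem~\ref{tw.2} to dispose of most of the work. Part (i) is the special case $Y=a$ (a constant, hence $\mathcal{G}$-measurable) of Theorem~\ref{tw.2}(i), so it is immediate. Part (ii) follows from Theorem~\ref{tw.2}(iii) with the $\mathcal{G}$-measurable variable $Y$ taken to be the constant $a\ge 0$; one only needs to note that a deterministic $a\ge 0$ satisfies the hypothesis $Y\ge 0$ a.s. there. So the first two items require only a sentence each, citing Theorem~\ref{tw.2}.

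For part (iii), the plan is to argue directly from the defining properties of $\gamma_0(\cdot|\mathcal{G})$. Since $X\ge Y\ge\gamma_0(Y|\mathcal{G})$ a.s. and the last of these is $\mathcal{G}$-measurable with $\mathbb{P}(Y\ge\gamma_0(Y|\mathcal{G})|\mathcal{G})=1$ a.s., one gets $\mathbb{P}(X\ge\gamma_0(Y|\mathcal{G})|\mathcal{G})=1$ a.s.; then the maximality property (iii) in the definition of $\gamma_0(X|\mathcal{G})$ forces $\gamma_0(Y|\mathcal{G})\le\gamma_0(X|\mathcal{G})$ a.s. Part (iv) is then the instance of (iii) obtained by putting $Y\equiv a$ and invoking (i) — or, equally quickly, a direct check that the constant $a$ lies in the set $\{Q_t\}$ defining the essential supremum.

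Part (v) is the one place where a genuinely new observation is needed, and I expect it to be the main (though still mild) obstacle. The plan is: first show $\gamma_0(X|\mathcal{G}_0)=\gamma_0^X$ a.s.\ by noting that a $\mathcal{G}_0$-measurable extended rv is a.s.\ constant, and that for a constant $c$ the condition $\mathbb{P}(X\ge c)=1$ is exactly $c\le\gamma_0^X$, so the essential supremum over all such $c$ is $\gamma_0^X$. Second, and this is the crux, show that when $X$ is independent of $\mathcal{G}$ the constant $\gamma_0^X$ already serves as a version of $\gamma_0(X|\mathcal{G})$: it is trivially $\mathcal{G}$-measurable, and $\mathbb{P}(X\ge\gamma_0^X|\mathcal{G})=\mathbb{P}(X\ge\gamma_0^X)=1$ a.s.\ by independence; for maximality, given any $\mathcal{G}$-measurable extended rv $Q_0$ with $\mathbb{P}(X\ge Q_0|\mathcal{G})=1$ a.s., one wants $Q_0\le\gamma_0^X$ a.s. Here the subtle point is that $Q_0$ need not be independent of $X$ merely because $X\perp\mathcal{G}$ — wait, it is $\mathcal{G}$-measurable, hence $\sigma(Q_0)\subseteq\mathcal{G}$, so $Q_0$ \emph{is} independent of $X$; then for any $c\in\mathbb{R}$, on the event $[Q_0>c]$ (which is $\mathcal{G}$-measurable) we have $\mathbb{P}(X\ge Q_0|\mathcal{G})\le\mathbb{P}(X> c|\mathcal{G})=\mathbb{P}(X>c)$ a.s., so if $\mathbb{P}(X\ge Q_0|\mathcal{G})=1$ a.s.\ then $\mathbb{P}(X>c)=1$ whenever $\mathbb{P}(Q_0>c)>0$, forcing $c\le\gamma_0^X$ for all such $c$ and hence $Q_0\le\gamma_0^X$ a.s. I would write this last step carefully, since it is the only part not reducible to a one-line citation of Theorem~\ref{tw.2}, and then conclude $\gamma_0(X|\mathcal{G})=\gamma_0^X=\gamma_0(X|\mathcal{G}_0)$ a.s.
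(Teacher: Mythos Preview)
Your proposal is correct. Parts (i)--(iv) match the paper's proof essentially verbatim: (i) and (ii) by specializing Theorem~\ref{tw.2} to the constant $Y=a$, (iii) by noting that $\gamma_0(Y|\mathcal G)$ lies in the family $\{Q_t,t\in T\}$ over which $\gamma_0(X|\mathcal G)$ is the essential supremum, and (iv) as the instance $Y=a$ of (iii) combined with (i).

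For (v) the two arguments diverge. The paper first obtains $\gamma_0(X|\mathcal G)\ge\gamma_0^X$ a.s.\ from (iv) applied to $X\ge\gamma_0^X$, and then proves the reverse inequality by contradiction: assuming $\mathbb P(\gamma_0(X|\mathcal G)>\gamma_0^X)>0$, it uses independence of $X$ and $\mathcal G$ to compute $\lim_n\mathbb P\bigl(\gamma_0(X|\mathcal G)>\gamma_0^X+1/n\,\big|\,X\le\gamma_0^X+1/n\bigr)$ in two ways and gets a positive limit on one hand and $0$ on the other. You instead verify directly that the constant $\gamma_0^X$ fulfils all three defining clauses of $\gamma_0(X|\mathcal G)$, the only nontrivial one being maximality, which you handle via the observation that on $[Q_0>c]$ one has $1=\mathbb P(X\ge Q_0|\mathcal G)\le\mathbb P(X>c|\mathcal G)=\mathbb P(X>c)$ a.s., forcing $c\le\gamma_0^X$ whenever $\mathbb P(Q_0>c)>0$, and hence $Q_0\le\gamma_0^X$ a.s.\ by letting $c\downarrow\gamma_0^X$. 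Your route is a shade more direct and avoids the contradiction setup; the paper's route reuses (iv) to dispose of one inequality for free and isolates independence in the other. Both are short and either would serve.
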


\begin{proof}
Properties (i) and (ii) follow from Theorem~\ref{tw.2} (i) and (iii), respectively, by taking $Y=a$.

To prove (iii) note that, by the definition of~$\gamma_0(Y|\mathcal{G})$, we have \linebreak  $\mathbb{P}(Y\ge \gamma_0(Y|\mathcal{G})|\mathcal{G})=1$ a.s. By assumption, $X\ge Y$ a.s. This gives \linebreak $\mathbb{P}(X\ge \gamma_0(Y|\mathcal{G})|\mathcal{G})=1$ a.s. Hence $\gamma_0(Y|\mathcal{G})$ belongs to the family $\{Q_t,t\in T\}$ of $\mathcal{G}$-measurable extended rv's $Q_t$ such that $\mathbb{P}(X\ge Q_t|\mathcal{G})=1$ a.s. and consequently 
\[
\gamma_0(Y|\mathcal{G})\le \esssuptT Q_t= \gamma_0(X|\mathcal{G})\textrm{ a.s.}
\]

Property (iv) is a~direct consequence of~(i) and~(iii).

For (v) observe that (iv) together with the fact that $X\ge\gamma_0^{X}$ imply $\gamma_0(X|\mathcal{G})\ge \gamma_0^{X}$ a.s. Therefore the proof is completed by showing that
\begin{equation}\label{r1}
\gamma_0(X|\mathcal{G})\le \gamma_0^{X}\textrm{ a.s.}
\end{equation}
To see this, suppose, contrary to our claim, that~\eqref{r1} is not satisfied. Define $G=\{\omega: \gamma_0(X|\mathcal{G})(\omega)>\gamma_0^{X} \}$. If \eqref{r1} is not true then $\mathbb{P}(G)>0$. Next, let $G_n=\{\omega: \gamma_0(X|\mathcal{G})(\omega)>\gamma_0^{X} +\frac{1}{n} \}$, $n\ge 1$. Since $G=\bigcup_{n=1}^{\infty}G_n$ and $G_{n+1}\supseteq G_n$, we have $\mathbb{P}(G)=\lim_{n\to\infty} \mathbb{P}(G_n)$. The independence of~$X$ and~$\mathcal{G}$ gives
\begin{align}\label{r2}
&\lim_{n\to\infty} \mathbb{P}(\gamma_0(X|\mathcal{G})>\gamma_0^{X}+\tfrac{1}{n}|X\le \gamma_0^{X}+\tfrac{1}{n}) \nonumber \\ 
&= \lim_{n\to\infty} \mathbb{P}(\gamma_0(X|\mathcal{G})>\gamma_0^{X}+\tfrac{1}{n})=\lim_{n\to\infty} \mathbb{P}(G_n)=\mathbb{P}(G)>0.
\end{align}
On the other hand, for any $n\ge 1$,
\begin{align*}
\mathbb{P}(\gamma_0(X|\mathcal{G})>\gamma_0^{X}+\tfrac{1}{n}|X\le \gamma_0^{X}+\tfrac{1}{n})\le \mathbb{P}(\gamma_0^{X}+\tfrac{1}{n}>\gamma_0^{X}+\tfrac{1}{n}|X\le \gamma_0^{X}+\tfrac{1}{n})=0,
\end{align*}
because from $X\le \gamma_0^{X}+\frac{1}{n}$ and (iii) we get $\gamma_0(X|\mathcal{G})\le\gamma_0^{X}+\frac{1}{n}$, by (i).
This clearly forces
\begin{align*}
\lim_{n\to\infty} \mathbb{P}(\gamma_0(X|\mathcal{G})>\gamma_0^{X}+\tfrac{1}{n}|X\le \gamma_0^{X}+\tfrac{1}{n})=0,
\end{align*}
contrary to~\eqref{r2}.
\end{proof}

\begin{tw}
\label{stalyKwantyl}
Let $X$ be an rv and $\mathcal{G}\subseteq\mathcal{F}$ be a~sigma-field. If $\gamma_0(X|\mathcal{G})$ is almost surely constant, then $\gamma_0(X|\mathcal{G})=\gamma_0^{X}$ a.s.
\end{tw}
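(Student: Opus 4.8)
The plan is to sandwich the constant $c := \gamma_0(X|\mathcal{G})$ between $\gamma_0^X$ and itself, using the defining properties (ii) and (iii) of the conditional left endpoint together with the easy relation between unconditional endpoints and these conditional objects. First I would note that, since $X \ge \gamma_0^X$ a.s., Theorem~\ref{tw.3.4}(iv) gives $\gamma_0(X|\mathcal{G}) \ge \gamma_0^X$ a.s., hence $c \ge \gamma_0^X$. So it remains to prove the reverse inequality $c \le \gamma_0^X$.

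For the reverse inequality I would argue by contradiction: suppose $c > \gamma_0^X$ (this includes the case $\gamma_0^X = -\infty$, where $c$ is a finite or $-\infty$ constant strictly above $-\infty$, i.e. finite or $+\infty$). By definition of $\gamma_0^X$ as $\inf\{x: F(x) > 0\}$, for every $\varepsilon > 0$ with $\gamma_0^X + \varepsilon < c$ we have $F(\gamma_0^X + \varepsilon) = \mathbb{P}(X \le \gamma_0^X + \varepsilon) > 0$. On the other hand, property (ii) of $\gamma_0(X|\mathcal{G})$ says $\mathbb{P}(X \ge c \mid \mathcal{G}) = 1$ a.s., and since $c$ is a constant this is just $\mathbb{P}(X \ge c) = 1$, i.e. $X \ge c$ a.s. But then $\mathbb{P}(X \le \gamma_0^X + \varepsilon) \le \mathbb{P}(X \le c - \delta) = 0$ for suitable $\delta > 0$ (choosing $\varepsilon$ so that $\gamma_0^X + \varepsilon < c$), contradicting $F(\gamma_0^X+\varepsilon) > 0$. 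Hence $c \le \gamma_0^X$ a.s., and combined with the first step we conclude $\gamma_0(X|\mathcal{G}) = \gamma_0^X$ a.s.

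I expect the only delicate point to be the bookkeeping around infinite values: one must check the argument still works when $\gamma_0^X = -\infty$ (then the first step is vacuous and only the contradiction argument is needed, with $c$ possibly $+\infty$, which is impossible since $X$ is a genuine real-valued rv forcing $\mathbb{P}(X \ge c) < 1$ for $c = +\infty$) and when $\gamma_0^X = +\infty$ is excluded because $X$ is real-valued. These are routine case checks rather than a genuine obstacle, so the proof should be short.
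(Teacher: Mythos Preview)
Your proof is correct and follows essentially the same approach as the paper's: both deduce $\mathbb{P}(X\ge c)=1$ from property~(ii) (which gives $c\le\gamma_0^X$) and use the maximality property~(iii) for the reverse inequality---you indirectly via Theorem~\ref{tw.3.4}(iv), the paper directly by showing $\mathbb{P}(X\ge c+\delta\mid\mathcal{G})<1$ with positive probability for each $\delta>0$. The only difference is organizational: you set up a sandwich $\gamma_0^X\le c\le\gamma_0^X$, while the paper computes $\gamma_0^X=\inf\{x:\mathbb{P}(X\ge x)<1\}=c$ in one stroke.
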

\begin{proof}
By assumption,  $\gamma_0(X|\mathcal{G})=c$ a.s. for some $c\in[-\infty,\infty)$. By the definition of $\gamma_0(X|\mathcal{G})$,
\begin{align*}
\mathbb{P}(X\geq c+\delta)=\mathbb{P}(X\geq \gamma_0(X|\mathcal{G})+\delta)=\mathbb{E}(\mathbb{P}(X\geq \gamma_0(X|\mathcal{G})+\delta|\mathcal{G})) & \\
=\left\{  
\begin{array}{lcl}
\mathbb{E}(1)=1 &  \textrm{ if  } &  \delta=0, \\
\mathbb{E}(P_{\delta})=p_{\delta} &  \textrm{ if  } &  \delta>0,  
\end{array} \right.&
\end{align*}
where the rv $P_{\delta}\in[0,1]$ is such that $P_{\delta}<1$ with positive probability, which implies $p_{\delta}\in[0,1)$. Hence 
$$\gamma_0^X=\inf\{x\in\R: \mathbb{P}(X\geq x)<1\}=c,$$
which proves the theorem.
\end{proof}

\begin{tw}
\label{s3wlgraniczne}
Let $\mathcal{G}$ be a~sigma-field with $\mathcal{G}\subseteq\mathcal{F}$ and $X,X_1,X_2,\ldots$ be rv's.  If $X_n\downarrow X$ a.s., then
\begin{equation}
\label{s3nw1}
\gamma_0(X_n|\mathcal{G})\xrightarrow{a.s.} \gamma_0(X|\mathcal{G})
\end{equation}
\end{tw}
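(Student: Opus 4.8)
The plan is to show two inequalities: $\limsup_{n\to\infty}\gamma_0(X_n|\mathcal{G})\le\gamma_0(X|\mathcal{G})$ a.s.\ and $\liminf_{n\to\infty}\gamma_0(X_n|\mathcal{G})\ge\gamma_0(X|\mathcal{G})$ a.s. The second one is easy: since $X_n\downarrow X$, we have $X_n\ge X$ a.s.\ for every $n$, so Theorem~\ref{tw.3.4}(iii) gives $\gamma_0(X_n|\mathcal{G})\ge\gamma_0(X|\mathcal{G})$ a.s.\ for every $n$, and hence $\liminf_{n\to\infty}\gamma_0(X_n|\mathcal{G})\ge\gamma_0(X|\mathcal{G})$ a.s. This same monotonicity also shows that $(\gamma_0(X_n|\mathcal{G}),n\ge1)$ is a non-increasing sequence a.s.\ (because $X_n\downarrow$ implies $X_n\ge X_{n+1}$ a.s.), so the limit $Q:=\lim_{n\to\infty}\gamma_0(X_n|\mathcal{G})$ exists a.s.\ as an extended rv, and it suffices to prove $Q\le\gamma_0(X|\mathcal{G})$ a.s., equivalently (by the maximality property, Theorem~\ref{tw.1}(iii), characterizing $\gamma_0(X|\mathcal{G})$) that $Q$ is $\mathcal{G}$-measurable and $\mathbb{P}(X\ge Q|\mathcal{G})=1$ a.s.

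The $\mathcal{G}$-measurability of $Q$ is immediate, being an a.s.\ limit of $\mathcal{G}$-measurable extended rv's. For the conditional inequality, I would invoke Lemma~\ref{l1} directly: set $Q_n:=\gamma_0(X_n|\mathcal{G})$. Then $Q_n$ is $\mathcal{G}$-measurable with $\mathbb{P}(X_n\ge Q_n|\mathcal{G})=1$ a.s.\ (property (ii) of the conditional left endpoint), and $X_n\xrightarrow{a.s.}X$, $Q_n\xrightarrow{a.s.}Q$. Hence Lemma~\ref{l1} yields $\mathbb{P}(X\ge Q|\mathcal{G})=1$ a.s. Combining this with $\mathcal{G}$-measurability of $Q$ and the maximality of $\gamma_0(X|\mathcal{G})$ gives $Q\le\gamma_0(X|\mathcal{G})$ a.s., completing the proof of \eqref{s3nw1}.

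There is essentially no hard obstacle here: the statement is designed to be the payoff of Lemma~\ref{l1}, which was tailored precisely for passing conditional domination through a.s.\ limits. The only point requiring a little care is the existence of the limit $Q$ as an extended rv — this is where I use the monotonicity from Theorem~\ref{tw.3.4}(iii), so that no subsequence argument is needed and the hypotheses of Lemma~\ref{l1} are met verbatim. One should also note that $X_n\downarrow X$ a.s.\ means for a.e.\ $\omega$ the sequence $X_n(\omega)$ is eventually finite and decreasing to $X(\omega)$, so all the almost-sure statements above are consistent and the conventions on $[-\infty,\infty]$ cause no trouble. I would write the proof in the order: (1)~monotonicity and existence of $Q$; (2)~the easy bound $Q\ge\gamma_0(X|\mathcal{G})$; (3)~application of Lemma~\ref{l1} to get $\mathbb{P}(X\ge Q|\mathcal{G})=1$ a.s.; (4)~conclude $Q\le\gamma_0(X|\mathcal{G})$ via maximality, hence equality.
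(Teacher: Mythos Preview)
Your proof is correct and follows essentially the same approach as the paper: both use Theorem~\ref{tw.3.4}(iii) to obtain monotonicity of $(\gamma_0(X_n|\mathcal{G}))$ and hence existence of the limit $Q$, and both invoke Lemma~\ref{l1} as the key step to get $\mathbb{P}(X\ge Q|\mathcal{G})=1$ a.s. The only cosmetic difference is the packaging of the conclusion: the paper verifies directly that $Q$ satisfies all three defining properties (i)--(iii) of $\gamma_0(X|\mathcal{G})$ (in particular, property~(iii) is checked by noting that any competitor $Q_0$ satisfies $\mathbb{P}(X_n\ge Q_0|\mathcal{G})=1$ a.s.\ and hence $Q_0\le\gamma_0(X_n|\mathcal{G})$ for all $n$), whereas you establish the two-sided inequality $Q\ge\gamma_0(X|\mathcal{G})$ and $Q\le\gamma_0(X|\mathcal{G})$; these are equivalent formulations. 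One small technical point the paper handles explicitly and you gloss over: since the monotonicity of $(\gamma_0(X_n|\mathcal{G}))$ holds only a.s., the paper defines $\overline{Q}_0=\lim_n\gamma_0(X_n|\mathcal{G})\cdot I(\Omega_0)$ with $\Omega_0\in\mathcal{G}$ to ensure genuine $\mathcal{G}$-measurability (not just up to a $\mathbb{P}$-null set); you could achieve the same by taking $Q:=\inf_n\gamma_0(X_n|\mathcal{G})$.
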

\begin{proof}
First note that, since by assumption
\[
X_{n+1}\le X_n\textrm{ a.s. for all }n\ge 1,
\]
Theorem \ref{tw.3.4}~(iii) gives
\[
\gamma_0(X_{n+1}|\mathcal{G})\le\gamma_0(X_n|\mathcal{G})\textrm{ a.s. for all }n\ge 1.
\]
This means that the sequence $(\gamma_0(X_n|\mathcal{G}), n\ge 1)$ is nonincreasing a.s., which implies
\begin{equation}\label{s3nw2}
\lim_{n\to\infty}\gamma_0(X_n|\mathcal{G})\textrm{ exists (possibly infinite) a.s.}
\end{equation}
Moreover, the $\mathcal{G}$-measurability of $\gamma_0(X_n|\mathcal{G})$, $n\ge 1$, shows that
\[
\Omega_0=\{\omega\in\Omega\colon \lim_{n\to\infty}\gamma_0(X_n|\mathcal{G})\textrm{ exists (possibly infinite)}\}\in\mathcal{G},
\]
and by~\eqref{s3nw2} we have
\begin{equation}\label{s3nw3}
\mathbb{P}(\Omega_0)=1.
\end{equation}

Let~$\overline{Q}_0=\lim_{n\to\infty}\gamma_0(X_n|\mathcal{G})\cdot I(\Omega_0)$. We will prove that $\overline{Q}_0$ is a~version of~$\gamma_0(X|\mathcal{G})$ and hence~\eqref{s3nw1} holds, by~\eqref{s3nw3}. First note that~$\overline{Q}_0$ is $\mathcal{G}$-measurable as a~limit of~$\mathcal{G}$-measurable extended rv's $\gamma_0(X_n|\mathcal{G})\cdot I(\Omega_0)$, $n\ge 1$. Next, by Lemma~\ref{l1}, $\mathbb{P}(X\ge \overline{Q}_0|\mathcal{G})=1$ a.s. Therefore it remains to show that if~$Q_0$ is a~$\mathcal{G}$-measurable extended rv such that $\mathbb{P}(X\ge Q_0|\mathcal{G})=1$ a.s., then $Q_0\le\overline{Q}_0$ a.s. To do this, observe that by assumption
\[
X_n\ge X\textrm{ a.s. for all }n\ge 1,
\]
which, by the monotonicity property of conditional expectations, gives
\[
1=\mathbb{P}(X\ge Q_0|\mathcal{G})\le\mathbb{P}(X_n\ge Q_0|\mathcal{G})\textrm{ a.s.}
\]
This clearly forces $\mathbb{P}(X_n\ge Q_0|\mathcal{G})=1$ a.s. We conclude from the definition of~$\gamma_0(X_n|\mathcal{G})$ that $Q_0\le \gamma_0(X_n|\mathcal{G})$ a.s. for all $n\ge 1$, hence that $Q_0\le\lim_{n\to\infty}\gamma_0(X_n|\mathcal{G})$ a.s. and finally that $Q_0\le\overline{Q}_0$ a.s. The proof is complete.
\end{proof}

It is worth pointing out that in Theorem~\ref{s3wlgraniczne} the assumption that $X_n\downarrow X$ a.s. cannot be replaced by $X_n\uparrow X$, even if we additionally require that $X$ is bounded. This is shown in the following example.

\begin{ex}
Let $(\Omega,\mathcal{F},\mathbb{P})=([0,1],\mathcal{B}([0,1]),Leb)$, where $\mathcal{B}([0,1])$ denotes the Borel sigma-field of subsets of~$[0,1]$ and $Leb$ stands for Lebesgue measure. On this probability space define $X_n=I([\frac{1}{n},1])$, $n\ge 1$, and $X=1$. Then $X_n\uparrow X$ a.s. and $X$ is bounded but, by Theorem~\ref{tw.3.4}~(v),
\[
\gamma_0(X_n|\mathcal{G}_0)=0\to 0\neq \gamma_0(X|\mathcal{G}_0)=1\textrm{ a.s.}
\]
with $\mathcal{G}_0=\{\emptyset,[0,1]\}$.
\end{ex}


\section{The strong ergodic theorem}

The aim of this section is to provide a complete generalization of~Theorem~\ref{l3} by quiting the ergodicity assumption. To state and prove this result we need not only the new concepts of conditional left and right endpoints of a~support but also some terminology and facts from the ergodic theory. 

By $(\mathbb{R}^{\mathbb{N}},\mathcal{B}(\mathbb{R}^{\mathbb{N}}),\mathbb{Q})$ we denote a~probability triple, where $\mathbb{R}^{\mathbb{N}}$ is the set of sequences of real numbers $(x_1,x_2,\ldots)$, $\mathcal{B}(\mathbb{R}^{\mathbb{N}})$ stands for the Borel sigma-field of subsets of $\mathbb{R}^{\mathbb{N}}$ and $\mathbb{Q}$ is a~stationary probability measure on the pair $(\mathbb{R}^{\mathbb{N}},\mathcal{B}(\mathbb{R}^{\mathbb{N}}))$.

A~set $B\in \mathcal{B}(\mathbb{R}^{\mathbb{N}})$ is called
\begin{itemize}
    \item 
    invariant if $B=T^{-1}B$,
    \item
    almost invariant for $\mathbb{Q}$ if 
    \[
    \mathbb{Q}((B\setminus T^{-1}B)\cup(T^{-1}B\setminus B))=0,
    \]
\end{itemize}
where 
\begin{equation}\label{transfT}
T^{-1}B=\{(x_1,x_2,\ldots)\in\mathbb{R}^{\mathbb{N}}:\ (x_2,x_3,\ldots)\in B \}.
\end{equation}
The class of all invariant events is denoted by $\tilde{\mathcal{I}}$, while the class of all almost invariant events for $\mathbb{Q}$ is denoted by $\mathcal{I}_{\mathbb{Q}}$. The following properties of $\mathcal{I}_{\mathbb{Q}}$ and $\tilde{\mathcal{I}}$ are well known; see, for example, Durrett (\cite{dur2010}, Chapter 6) and Shiryaev (\cite{shir1996}, Chapter V).

\begin{lem}\label{l2}
\begin{description}
\item[(i)]
    $\tilde{\mathcal{I}}$ and $\mathcal{I}_{\mathbb{Q}}$ are sigma-fields.
\item[(ii)]
    An rv $X$ on $(\mathbb{R}^{\mathbb{N}},\mathcal{B}(\mathbb{R}^{\mathbb{N}}),\mathbb{Q})$ is 
$\tilde{\mathcal{I}}$-measurable (or $\mathcal{I}_{\mathbb{Q}}$-measurable) iff
  \[
    X((x_1,x_2,\ldots))=X((x_2,x_3,\ldots))\textrm{ for all }   (x_1,x_2,\ldots)\in\mathbb{R}^{\mathbb{N}}
    \]
    \[
   (\textrm{or }  X((x_1,x_2,\ldots))=X((x_2,x_3,\ldots))\textrm{ for }\mathbb{Q}\textrm{-almost every }(x_1,x_2,\ldots)\in\mathbb{R}^{\mathbb{N}}.
    \]
\item[(iii)]
    If $B$ is almost invariant, there exists an invariant set $C$ such that 
    \[
    \mathbb{Q}((B\setminus C)\cup(C\setminus B))=0.
    \]
\end{description}
\end{lem}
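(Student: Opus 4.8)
The plan is to dispatch the three parts in order, using only elementary set algebra together with the shift-invariance of $\mathbb{Q}$, since each assertion is a standard fact of ergodic theory. For (i), I would rely on the observation, immediate from \eqref{transfT}, that the operation $B\mapsto T^{-1}B$ commutes with complementation and with arbitrary unions and intersections. Hence $\{B\in\mathcal{B}(\mathbb{R}^{\mathbb{N}}):T^{-1}B=B\}$ is stable under those operations and contains $\mathbb{R}^{\mathbb{N}}$, so $\tilde{\mathcal{I}}$ is a sigma-field. For $\mathcal{I}_{\mathbb{Q}}$ the same reasoning applies once one notes the set-theoretic facts $B^{c}\triangle T^{-1}(B^{c})=B\triangle T^{-1}B$ and $\bigl(\bigcup_{n}B_{n}\bigr)\triangle\bigl(\bigcup_{n}T^{-1}B_{n}\bigr)\subseteq\bigcup_{n}(B_{n}\triangle T^{-1}B_{n})$ (here $\triangle$ denotes symmetric difference), together with $T^{-1}\bigcup_{n}B_{n}=\bigcup_{n}T^{-1}B_{n}$; since a countable union of $\mathbb{Q}$-null symmetric differences is $\mathbb{Q}$-null, $\mathcal{I}_{\mathbb{Q}}$ is closed under complements and countable unions and contains $\mathbb{R}^{\mathbb{N}}$.

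For (ii), write $Tx=(x_{2},x_{3},\ldots)$ for $x=(x_{1},x_{2},\ldots)$; the condition displayed in the statement is precisely that $X=X\circ T$ everywhere (respectively, $\mathbb{Q}$-a.e.). The ``if'' direction is immediate: for every Borel $A\subseteq\mathbb{R}$ one has $X^{-1}(A)=(X\circ T)^{-1}(A)=T^{-1}\bigl(X^{-1}(A)\bigr)$ in the first case, and in the second case these two Borel sets differ only by a subset of the Borel $\mathbb{Q}$-null set $\{x:X(x)\neq X(Tx)\}$, so $X^{-1}(A)\in\tilde{\mathcal{I}}$, respectively $X^{-1}(A)\in\mathcal{I}_{\mathbb{Q}}$. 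For the ``only if'' direction I would use the standard dyadic approximation: an $\tilde{\mathcal{I}}$-measurable (respectively $\mathcal{I}_{\mathbb{Q}}$-measurable) real-valued $X$ is the everywhere pointwise limit of simple functions $X_{m}=\sum_{i}c_{i}^{(m)}I_{B_{i}^{(m)}}$ with each $B_{i}^{(m)}$ invariant (respectively almost invariant). Since $I_{B}\circ T=I_{T^{-1}B}$ equals $I_{B}$ identically when $B$ is invariant and equals $I_{B}$ off a $\mathbb{Q}$-null set when $B$ is almost invariant, each $X_{m}$ satisfies $X_{m}\circ T=X_{m}$ everywhere (respectively off a $\mathbb{Q}$-null set $N_{m}$); letting $m\to\infty$ gives $X\circ T=X$ everywhere (respectively off $\bigcup_{m}N_{m}$, which is $\mathbb{Q}$-null), as required.

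For (iii), given an almost invariant $B$ the natural candidate is the invariant ``core'' $C=\limsup_{n\to\infty}T^{-n}B=\bigcap_{n\ge 0}\bigcup_{k\ge n}T^{-k}B$. A one-line index shift shows $T^{-1}C=\bigcap_{n\ge 1}\bigcup_{k\ge n}T^{-k}B=C$, so $C$ is invariant. The heart of the matter — and the only place where stationarity of $\mathbb{Q}$ is used — is to show $\mathbb{Q}(B\triangle C)=0$. Stationarity gives $\mathbb{Q}(T^{-k}E)=\mathbb{Q}(E)$ for every $k$, hence $\mathbb{Q}(T^{-k}B\triangle T^{-(k+1)}B)=\mathbb{Q}\bigl(T^{-k}(B\triangle T^{-1}B)\bigr)=0$; then the triangle inequality for the pseudometric $(E,F)\mapsto\mathbb{Q}(E\triangle F)$ yields $\mathbb{Q}(B\triangle T^{-k}B)=0$ for all $k\ge 0$. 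Writing $C_{n}=\bigcup_{k\ge n}T^{-k}B$, the inclusion $B\triangle C_{n}\subseteq\bigcup_{k\ge n}(B\triangle T^{-k}B)$ gives $\mathbb{Q}(B\triangle C_{n})=0$ for every $n$; since $C\subseteq C_{n}$ and $B\setminus C=\bigcup_{n}(B\setminus C_{n})$, countable subadditivity forces $\mathbb{Q}(C\setminus B)=0=\mathbb{Q}(B\setminus C)$, i.e. $\mathbb{Q}(B\triangle C)=0$. I expect this last part — in particular the passage from $\mathbb{Q}(B\triangle T^{-1}B)=0$ to $\mathbb{Q}(B\triangle T^{-n}B)=0$ and the verification that the proposed $C$ is genuinely invariant — to be the only step requiring care; parts (i) and (ii) amount to bookkeeping with Boolean operations and a routine simple-function approximation.
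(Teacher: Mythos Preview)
Your proof is correct and self-contained. Note, however, that the paper does not actually prove this lemma: it records the three assertions as ``well known'' and refers the reader to Durrett (\cite{dur2010}, Chapter~6) and Shiryaev (\cite{shir1996}, Chapter~V). So there is no argument in the paper to compare against; you have supplied precisely the standard textbook proofs that those references contain. In particular, your construction in part~(iii), taking $C=\limsup_{n}T^{-n}B$ and using stationarity plus the triangle inequality for the pseudometric $\mathbb{Q}(\cdot\,\triangle\,\cdot)$ to show $\mathbb{Q}(B\triangle T^{-k}B)=0$ for all $k$, is exactly the classical route.
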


Now we are ready to formulate and prove the first version of the strong ergodic theorem for extreme and intermediate order statistics. 
\begin{tw}\label{tw.3}
Let $Y$ be an rv on a probability space $(\mathbb{R}^{\mathbb{N}},\mathcal{B}(\mathbb{R}^{\mathbb{N}}),\mathbb{Q})$, where the probability
measure $\mathbb{Q}$ is stationary. Suppose that the sequence of rv's $(Y_n, n \ge 1)$ is defined by
\begin{equation}\label{r6}
Y_i ((x_1, x_2,\ldots)) = Y ((x_i , x_{i+1},\ldots))\textrm{ for }(x_1, x_2,\ldots) \in \mathbb{R}^{\mathbb{N}}\textrm{ and }i \ge 1.
\end{equation}
If $(k_n, n \ge 1)$ is a~sequence of  integers satisfying~\eqref{warK}
then
\begin{equation}\label{teza4.1}
Y_{k_n:n}\xrightarrow{\mathbb{Q}-a.s.}\gamma_0(Y|\mathcal{I}_{\mathbb{Q}})\  (\gamma_1(Y|\mathcal{I}_{\mathbb{Q}}))\textrm{ according as }\lambda=0\ (\lambda=1).
\end{equation}
\end{tw}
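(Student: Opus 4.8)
The plan is to adapt the proof of Theorem~\ref{l3}, replacing the classical strong ergodic theorem by Birkhoff's theorem relative to the invariant sigma-field, and the deterministic endpoint $\gamma_0^{X_1}$ by the conditional left endpoint $\gamma_0(Y|\mathcal{I}_{\mathbb{Q}})$. As in that proof, it suffices to treat $\lambda=0$: the case $\lambda=1$ follows by applying the $\lambda=0$ statement to $(-Y,(-Y_n)_{n\ge1})$, using $(-Y)_{j:n}=-Y_{n-j+1:n}$, the fact that $j_n:=n-k_n+1$ again satisfies~\eqref{warK} with $j_n/n\to0$, and relation~\eqref{wzor4}. For the lower bound, property (ii) of $\gamma_0(Y|\mathcal{I}_{\mathbb{Q}})$ gives $Y\ge\gamma_0(Y|\mathcal{I}_{\mathbb{Q}})$ $\mathbb{Q}$-a.s.; since $Y_i=Y\circ T^{i-1}$, $\mathbb{Q}$ is $T$-invariant, and $\gamma_0(Y|\mathcal{I}_{\mathbb{Q}})$ is $\mathcal{I}_{\mathbb{Q}}$-measurable and hence $\mathbb{Q}$-a.s.\ invariant under each $T^{i-1}$ by Lemma~\ref{l2}(ii), it follows that $Y_i\ge\gamma_0(Y|\mathcal{I}_{\mathbb{Q}})$ $\mathbb{Q}$-a.s.\ for every $i$, so that $Y_{k_n:n}\ge Y_{1:n}\ge\gamma_0(Y|\mathcal{I}_{\mathbb{Q}})$ $\mathbb{Q}$-a.s.\ for every $n$, and, by countability of $\mathbb{N}$, $\liminf_{n\to\infty}Y_{k_n:n}\ge\gamma_0(Y|\mathcal{I}_{\mathbb{Q}})$ $\mathbb{Q}$-a.s.

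For the upper bound, fix $m\ge1$ and set $d_m:=\gamma_0(Y|\mathcal{I}_{\mathbb{Q}})+\tfrac1m$ on the event $\{\gamma_0(Y|\mathcal{I}_{\mathbb{Q}})>-\infty\}$ and $d_m:=-m$ on its complement; then $d_m$ is a real-valued, $\mathcal{I}_{\mathbb{Q}}$-measurable rv with $d_m>\gamma_0(Y|\mathcal{I}_{\mathbb{Q}})$ everywhere and $d_m\downarrow\gamma_0(Y|\mathcal{I}_{\mathbb{Q}})$ as $m\to\infty$. The crucial point --- which I expect to be the main obstacle, since it is not among the properties catalogued in Section~\ref{sec3} --- is the positivity
\[
\mathbb{Q}(Y\le d_m|\mathcal{I}_{\mathbb{Q}})>0\quad\mathbb{Q}\text{-a.s.}
\]
I would prove this by contradiction: if the $\mathcal{I}_{\mathbb{Q}}$-measurable set $A:=\{\mathbb{Q}(Y\le d_m|\mathcal{I}_{\mathbb{Q}})=0\}$ had positive probability, then on $A$ one has $\mathbb{Q}(Y\ge d_m|\mathcal{I}_{\mathbb{Q}})\ge\mathbb{Q}(Y>d_m|\mathcal{I}_{\mathbb{Q}})=1$ a.s., so the $\mathcal{I}_{\mathbb{Q}}$-measurable extended rv $Q^{*}$ equal to $d_m$ on $A$ and to $\gamma_0(Y|\mathcal{I}_{\mathbb{Q}})$ on $\Omega\setminus A$ would satisfy $\mathbb{Q}(Y\ge Q^{*}|\mathcal{I}_{\mathbb{Q}})=1$ a.s.\ (using property (ii) on $\Omega\setminus A$) while strictly exceeding $\gamma_0(Y|\mathcal{I}_{\mathbb{Q}})$ on $A$, contradicting the maximality property (iii) of Theorem~\ref{tw.1}.

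Granting this, the proof concludes as in Theorem~\ref{l3}. Since $d_m$ is $\mathcal{I}_{\mathbb{Q}}$-measurable and thus $\mathbb{Q}$-a.s.\ invariant under every $T^{i-1}$, we have $I(Y_i\le d_m)=g\circ T^{i-1}$ $\mathbb{Q}$-a.s.\ for each $i$, where $g:=I(Y\le d_m)$. Applying Birkhoff's ergodic theorem to the stationary sequence $(g\circ T^{i-1})_{i\ge1}$, and identifying $\mathbb{E}_{\mathbb{Q}}(g|\tilde{\mathcal{I}})$ with $\mathbb{E}_{\mathbb{Q}}(g|\mathcal{I}_{\mathbb{Q}})=\mathbb{Q}(Y\le d_m|\mathcal{I}_{\mathbb{Q}})$ by Lemma~\ref{l2}(iii), we obtain
\[
\frac1n\sum_{i=1}^{n}I(Y_i\le d_m)\xrightarrow{\mathbb{Q}-a.s.}\mathbb{Q}(Y\le d_m|\mathcal{I}_{\mathbb{Q}})>0.
\]
Because $k_n/n\to0$, the difference $\frac1n\sum_{i=1}^{n}I(Y_i\le d_m)-\frac{k_n}{n}$ is eventually positive $\mathbb{Q}$-a.s., i.e.\ $\sum_{i=1}^{n}I(Y_i\le d_m)\ge k_n$ for all large $n$, which is precisely $Y_{k_n:n}\le d_m$ for all large $n$; hence $\limsup_{n\to\infty}Y_{k_n:n}\le d_m$ $\mathbb{Q}$-a.s. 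Intersecting these full-measure events over $m\in\mathbb{N}$ and letting $m\to\infty$ gives $\limsup_{n\to\infty}Y_{k_n:n}\le\gamma_0(Y|\mathcal{I}_{\mathbb{Q}})$ $\mathbb{Q}$-a.s., which together with the lower bound establishes~\eqref{teza4.1}.
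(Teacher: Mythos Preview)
Your proof is correct and follows essentially the same route as the paper's: the same reduction to $\lambda=0$, the same lower bound via $Y_i\ge\gamma_0(Y|\mathcal{I}_{\mathbb{Q}})$ from stationarity and almost-sure invariance of $\gamma_0(Y|\mathcal{I}_{\mathbb{Q}})$, the same random thresholds $d_m$ (the paper's $D_m$), the same application of Birkhoff's theorem to $I(Y\le d_m)$, and the same contradiction argument for the key positivity $\mathbb{Q}(Y\le d_m|\mathcal{I}_{\mathbb{Q}})>0$ a.s. Your contradiction is in fact slightly more economical than the paper's, which splits the bad set into $G_1=\{\gamma_0(Y|\mathcal{I}_{\mathbb{Q}})=-\infty\}$ and $G_2=\{\gamma_0(Y|\mathcal{I}_{\mathbb{Q}})>-\infty\}$ and builds separate competitors $Q_1,Q_2$; your single $Q^{*}=d_m\cdot I_A+\gamma_0(Y|\mathcal{I}_{\mathbb{Q}})\cdot I_{A^c}$ handles both cases at once since $d_m>\gamma_0(Y|\mathcal{I}_{\mathbb{Q}})$ everywhere by construction.
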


\begin{proof}
We may assume that $\lambda=0$, because the case $\lambda=1$ is an~immediate consequence of the former. 
If we prove that 
\begin{equation}\label{8}
\limsup_{n\to\infty}Y_{k_n:n}\le\gamma_0(Y|\mathcal{I}_{\mathbb{Q}})\textrm{ a.s.}
\end{equation}
and
\begin{equation}\label{inf}
\liminf_{n\to\infty}Y_{k_n:n}\ge\gamma_0(Y|\mathcal{I}_{\mathbb{Q}})\textrm{ a.s.},
\end{equation}
the assertion follows. Let us first show~\eqref{8}. For $m\ge 1$, define rv's $D_m$ by
$$D_m((x_1,x_2,\ldots))=\left\{
\begin{array}{l}
-m   \quad \textrm{ if  }  \quad  \gamma_0(Y|\mathcal{I}_{\mathbb{Q}})((x_1,x_2,\ldots))=-\infty, \\
\gamma_0(Y|\mathcal{I}_{\mathbb{Q}})((x_1,x_2,\ldots))+\frac{1}{m}   \qquad \textrm{ otherwise. } 
\end{array}
\right.
$$
 Then $D_m$ is $\mathcal{I}_{\mathbb{Q}}$-measurable, which by part~(ii) of Lemma~\ref{l2} gives
\begin{align}
\label{pD}
&  D_m((x_i,x_{i+1},\ldots))  \\
& =D_m((x_1,x_2,\ldots))\textrm{ for any $i\geq 1$ and }\mathbb{Q}\textrm{-almost every } (x_1,x_2,\ldots)\in\mathbb{R}^{\mathbb{N}}. \nonumber
\end{align}
Fix $m\ge 1$. As in the proof of Theorem~\ref{l3} we get
\begin{align}\label{9}
\mathbb{Q}(Y_{k_n:n}\le D_m\textrm{ for all large }n)
&=\mathbb{Q}\Big(\frac{\sum_{i=1}^{n}I(Y_i\le D_m)}{n}-\frac{k_n}{n}\ge 0\textrm{ for all large }n\Big).
\end{align}
Set $Z=I(Y\le D_m)$ and
\[
Z_i ((x_1, x_2,\ldots)) = Z ((x_i , x_{i+1},\ldots))\textrm{ for }(x_1, x_2,\ldots) \in \mathbb{R}^{\mathbb{N}}\textrm{ and }i \ge 1.
\]
Then,  for $\mathbb{Q}$-almost every  $(x_1,x_2,\ldots)\in\mathbb{R}^{\mathbb{N}}$,
\begin{align}\label{A}
    Z_i ((x_1, x_2,\ldots)) &= Z ((x_i , x_{i+1},\ldots))= I(Y\le D_m)((x_i , x_{i+1},\ldots))\nonumber\\
    &= I(Y((x_i , x_{i+1},\ldots))\le D_m((x_i , x_{i+1},\ldots)))\nonumber\\
    &= I(Y_i((x_1 , x_{2},\ldots))\le D_m((x_1 , x_{2},\ldots))),
\end{align}
where the last equality is a~consequence of~\eqref{pD}.

Since $Z$ is an~rv on~$(\mathbb{R}^{\mathbb{N}},\mathcal{B}(\mathbb{R}^{\mathbb{N}}),\mathbb{Q})$ and $\mathbb{E}_{\mathbb{Q}}(|Z|)<1$, the classic strong ergodic theorem (see, for example, Durrett~\cite{dur2010}, p.333) gives
\begin{equation}\label{nowe0}
\frac{1}{n}\sum_{i=1}^n Z_i\xrightarrow{\mathbb{Q}-a.s.}\mathbb{E}_{\mathbb{Q}}(Z|\mathcal{I}_{\mathbb{Q}}),
\end{equation}
which by~\eqref{A} means that
\begin{equation}\label{9A}
\frac{1}{n}\sum_{i=1}^n I(Y_i\le D_m)\xrightarrow{\mathbb{Q}-a.s.} \mathbb{E}_{\mathbb{Q}}(I(Y\le D_m)|\mathcal{I}_{\mathbb{Q}})= \mathbb{Q}(Y\le D_m|\mathcal{I}_{\mathbb{Q}}).
\end{equation}
We claim that 
\begin{equation}\label{r8}
\mathbb{Q}(Y\le D_m|\mathcal{I}_{\mathbb{Q}})((x_1, x_2,\ldots))>0\textrm{ for }\mathbb{Q}\textrm{-almost every } (x_1,x_2,\ldots)\in\mathbb{R}^{\mathbb{N}}.
\end{equation}
To prove this, suppose, contrary to our claim, that $\mathbb{Q}(G)> 0$, where 
$$G:=\{(x_1, x_2,\ldots)\in\mathbb{R}^{\mathbb{N}}:\ \mathbb{Q}(Y\le D_m|\mathcal{I}_{\mathbb{Q}})((x_1 , x_{2},\ldots))= 0\}.$$ 
Let 
$$G_1=G\cap\{(x_1, x_2,\ldots)\in\mathbb{R}^{\mathbb{N}}:\ \gamma_0(Y|\mathcal{I}_{\mathbb{Q}})((x_1 , x_{2},\ldots))= -\infty\}$$
 and 
$$G_2=G\cap\{(x_1, x_2,\ldots)\in\mathbb{R}^{\mathbb{N}}:\ \gamma_0(Y|\mathcal{I}_{\mathbb{Q}})((x_1 , x_{2},\ldots))> -\infty\}.$$
 Then $\mathbb{Q}(G_1)> 0$ or $\mathbb{Q}(G_2)> 0$. On $G_1$ we have 
\[
0=\mathbb{Q}(Y\le D_m|\mathcal{I}_{\mathbb{Q}})=\mathbb{Q}(Y\le -m|\mathcal{I}_{\mathbb{Q}})\textrm{ a.s.},
\]
which implies
\begin{equation}\label{B}
\mathbb{Q}(Y\ge -m|\mathcal{I}_{\mathbb{Q}})=1\textrm{ a.s. on }G_1.
\end{equation}
Similarly on $G_2$
\[
0=\mathbb{Q}(Y\le D_m|\mathcal{I}_{\mathbb{Q}})=\mathbb{Q}(Y\le \gamma_0(Y|\mathcal{I}_{\mathbb{Q}})+\tfrac{1}{m}|\mathcal{I}_{\mathbb{Q}})\textrm{ a.s.},
\]
and consequently
\begin{equation}\label{C}
\mathbb{Q}(Y\ge \gamma_0(Y|\mathcal{I}_{\mathbb{Q}})+\tfrac{1}{m}|\mathcal{I}_{\mathbb{Q}})=1\textrm{ a.s. on }G_2.
\end{equation}
If $\mathbb{Q}(G_1)> 0$, defining $Q_1((x_1,x_2,\ldots))=-m$ for $(x_1,x_2,\ldots)\in G_1$ and $Q_1((x_1,x_2,\ldots))=\gamma_0(Y|\mathcal{I}_{\mathbb{Q}})((x_1,x_2,\ldots))$ otherwise, we would get that the following three conditions were satisfied. 
\begin{enumerate}
    \item 
    $Q_1$ is $\mathcal{I}_{\mathbb{Q}}$-measurable.
    \item
    $\mathbb{Q}(Y\ge Q_1|\mathcal{I}_{\mathbb{Q}})=1$ a.s. by~\eqref{B} and the definition of~$\gamma_0(Y|\mathcal{I}_{\mathbb{Q}})$.
    \item
    It is not true that $Q_1\le\gamma_0(Y|\mathcal{I}_{\mathbb{Q}})$ a.s. since $Q_1>\gamma_0(Y|\mathcal{I}_{\mathbb{Q}})$ on $G_1$.
\end{enumerate}
This contradicts the definition of $\gamma_0(Y|\mathcal{I}_{\mathbb{Q}})$. 

If in turn $\mathbb{Q}(G_2)> 0$, we take 
$$Q_2((x_1,x_2,\ldots))=
\left\{
\begin{array}{ll}
\gamma_0(Y|\mathcal{I}_{\mathbb{Q}})((x_1,x_2,\ldots))+\frac{1}{m} & \textrm{ if  }   (x_1,x_2,\ldots)\in G_2, \\
\gamma_0(Y|\mathcal{I}_{\mathbb{Q}})((x_1,x_2,\ldots)) & \textrm{ otherwise}.
\end{array}
\right.
$$
Using similar reasoning to the above, we would again contradict the definition of $\gamma_0(Y|\mathcal{I}_{\mathbb{Q}})$. 

Thus~\eqref{r8} is proved. Combining~\eqref{9A} with~\eqref{r8} we see that, as \linebreak $n\to\infty$,
\[
\frac{1}{n}\sum_{i=1}^n I(Y_i\le D_m)-\frac{k_n}{n}\to  \mathbb{Q}(Y\le D_m|\mathcal{I}_{\mathbb{Q}})>0 \quad \mathbb{Q}\textrm{-a.s.},
\]
which by~\eqref{9} leads to
\[
\mathbb{Q}(Y_{k_n:n}\le D_m\textrm{ for all large }n)
=1.
\]
This clearly forces
\[
\limsup_{n\to\infty}Y_{k_n:n}\le D_m \quad \mathbb{Q}\textrm{-a.s.}
\]
Since $m\ge 1$ was taken to be arbitrary, letting $m\to\infty$ and using the countability of the set of positive integers, we get 
\[
\limsup_{n\to\infty}Y_{k_n:n}\le \gamma_0(Y|\mathcal{I}_{\mathbb{Q}})\quad \mathbb{Q}\textrm{-a.s.},
\]
which establishes~\eqref{8}.

It remains to prove~\eqref{inf}. For this purpose, observe that
\[
1=\mathbb{E}_{\mathbb{Q}}(\mathbb{Q}(Y\ge\gamma_0(Y|\mathcal{I}_{\mathbb{Q}})|\mathcal{I}_{\mathbb{Q}}))
=\mathbb{Q}(Y\ge\gamma_0(Y|\mathcal{I}_{\mathbb{Q}})),
\]
where the first equality is a~consequence of~the definition of~$\gamma_0(Y|\mathcal{I}_{\mathbb{Q}})$. This gives
\begin{equation}\label{D}
Y_i\ge\gamma_0(Y|\mathcal{I}_{\mathbb{Q}})\ \mathbb{Q}\textrm{-a.s. for each }i\ge 1. 
\end{equation}
Indeed, we have, for any $i\ge 1$,
\begin{align*}
&\mathbb{Q}(Y_i\ge\gamma_0(Y|\mathcal{I}_{\mathbb{Q}})) \\
&=
\mathbb{Q}(\{(x_1,x_2,\ldots):\ Y_i((x_1,x_2,\ldots))\ge\gamma_0(Y|\mathcal{I}_{\mathbb{Q}})((x_1,x_2,\ldots))\})\\
&= \mathbb{Q}(\{(x_1,x_2,\ldots):\ Y((x_i,x_{i+1},\ldots))\ge\gamma_0(Y|\mathcal{I}_{\mathbb{Q}})((x_i,x_{i+1},\ldots))\})\\
&= \mathbb{Q}(T^{-(i-1)}(\{(x_1,x_2,\ldots):\ Y((x_1,x_2,\ldots))\ge\gamma_0(Y|\mathcal{I}_{\mathbb{Q}})((x_1,x_2,\ldots))\}))\\
&= \mathbb{Q}(\{(x_1,x_2,\ldots):\ Y((x_1,x_2,\ldots))\ge\gamma_0(Y|\mathcal{I}_{\mathbb{Q}})((x_1,x_2,\ldots))\})\\ &= \mathbb{Q}(Y\ge\gamma_0(Y|\mathcal{I}_{\mathbb{Q}}))=1\textrm{ a.s.},
\end{align*}
where the transformation $T^{-1}$ is defined in~\eqref{transfT} and the fourth equality follows from the stationarity of the measure $\mathbb{Q}$. Note that~\eqref{D} implies, for all $n\ge 1$,
\[
Y_{k_n:n}\ge\gamma_0(Y|\mathcal{I}_{\mathbb{Q}})\quad \mathbb{Q}\textrm{-a.s.},
\]
which gives~\eqref{inf}. The proof is complete.
\end{proof}

\begin{rem}
Since~\eqref{nowe0} is still true if we replace $\mathcal{I}_{\mathbb{Q}}$ by $\tilde{\mathcal{I}}$ (see, for example, Grimmet and Stirzaker \cite{grim2004}, Chapter~9), and we have a version of Lemma  \ref{l2} (ii) for $\mathcal{\tilde{I}}$-measurable rv's,  the~conclusion of~Theorem~\ref{tw.3} can as well have the following form
\begin{equation}\label{tezabis}
Y_{k_n:n}\xrightarrow{\mathbb{Q}-a.s.}\gamma_0(Y|\mathcal{\tilde{I}})\ (\gamma_1(Y|\mathcal{\tilde{I}}))\textrm{ according as }\lambda=0\ (\lambda=1).
\end{equation}
\end{rem}

Theorem~\ref{tw.3} deals with the almost sure limit of extreme and intermediate order statistics arising from the specific random sequence $(Y_n, n\ge 1)$ defined on a~probability space $(\mathbb{R}^{\mathbb{N}},\mathcal{B}(\mathbb{R}^{\mathbb{N}}),\mathbb{Q})$ by \eqref{r6}.

Our goal now is to reformulate this result in terms of~any strictly stationary sequence of rv's $(X_n,n\ge 1)$ existing in any probability space $(\Omega,\mathcal{F},\mathbb{P})$. To do this, for arbitrary such a~sequence $\mathbb{X}=(X_n;n\ge 1)$ we define a~stationary measure $\mathbb{Q}$ on the pair
$(\mathbb{R}^{\mathbb{N}},\mathcal{B}(\mathbb{R}^{\mathbb{N}}))$ by
\begin{equation}\label{12}
\mathbb{Q}(A)=\mathbb{P}(\mathbb{X}\in A)\textrm{ for all }A\in\mathcal{B}(\mathbb{R}^{\mathbb{N}}).
\end{equation}
Next, on the triple $(\mathbb{R}^{\mathbb{N}},\mathcal{B}(\mathbb{R}^{\mathbb{N}}),\mathbb{Q})$ we introduce an~rv $Y\colon\mathbb{R}^{\mathbb{N}}\to\mathbb{R}$ by
\begin{equation}\label{12A}
Y((x_1,x_2,\ldots))=x_1\textrm{ for }(x_1,x_2,\ldots)\in\mathbb{R}^{\mathbb{N}}
\end{equation}
and a~sequence of rv's $\mathbb{Y}=(Y_n,n\ge 1)$ by
\begin{equation}\label{12B}
Y_i((x_1,x_2,\ldots))=x_i\textrm{ for }(x_1,x_2,\ldots)\in\mathbb{R}^{\mathbb{N}},\ i\ge 1.
\end{equation}
Then \eqref{r6} holds and Theorem~\ref{tw.3} shows that, for any sequence of integers satisfying~\eqref{warK}, \eqref{teza4.1} is true. Since $\mathbb{X}$ and $\mathbb{Y}$ have the same distributions, the $\mathbb{Q}$-almost sure convergence of the sequence $(Y_{k_n:n},n\ge 1)$ to $\gamma_0(Y|\mathcal{I}_{\mathbb{Q}})$ ($\gamma_1(Y|\mathcal{I}_{\mathbb{Q}})$) entails the $\mathbb{P}$-almost sure convergence of $(X_{k_n:n},n\ge 1)$ to an~rv~$W$ such that $W\stackrel{d}{=}\gamma_0(Y|\mathcal{I}_{\mathbb{Q}})$ ($\gamma_1(Y|\mathcal{I}_{\mathbb{Q}})$). To describe the structure of~$W$, we need some more facts from the ergodic theory.

Recall that a~set $A\in\mathcal{F}$ is called invariant with respect to the sequence $\mathbb{X}=(X_n,n\ge 1)$ defined on~the probability space $(\Omega,\mathcal{F},\mathbb{P})$ if there exists a~set $B\in\mathcal{B}(\mathbb{R}^{\mathbb{N}})$ such that
\begin{equation}\label{warAinvar}
A=\{\omega\in\Omega:\ (X_i(\omega),X_{i+1}(\omega),\ldots)\in B \}\textrm{ for any }i\ge 1.
\end{equation}
The collection of all such invariant sets is denoted by $\mathcal{I}^{\mathbb{X}}$.
\begin{lem}\label{InvSeq}
Let $\mathbb{X}=(X_n,n\ge 1)$ be a~strictly stationary sequence on $(\Omega,\mathcal{F},\mathbb{P})$.
\begin{description}
 \item[(i)]
    $\mathcal{I}^{\mathbb{X}}$ is a~sigma-field.
 \item[(ii)] $\mathcal{I}^{\mathbb{X}}\subseteq \mathcal{T}^{\mathbb{X}}$, where $\mathcal{T}^{\mathbb{X}}$ denotes the tail sigma-field generated by the sequence $\mathbb{X}$.
  \item[(iii)]
    A~set $A\in\mathcal{F}$ is invariant with respect to~$\mathbb{X}$ if and only if there exists a~set $B\in\mathcal{I}_{\mathbb{Q}}$ satisfying~\eqref{warAinvar}.
 \item[(iv)]
    If an~rv $X$ on~$(\Omega,\mathcal{F},\mathbb{P})$ is $\mathcal{I}^{\mathbb{X}}$-measurable then there exists an $\mathcal{I}_{\mathbb{Q}}$-measurable rv~$Q$ on~$(\mathbb{R}^{\mathbb{N}},\mathcal{B}(\mathbb{R}^{\mathbb{N}}),\mathbb{Q})$ such that $X=Q((X_1,X_2,\ldots))$.
\end{description}
\end{lem}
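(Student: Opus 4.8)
The plan is to dispose of the four assertions in order: (i) and (ii) are short measurability checks, (iii) carries the only real idea, and (iv) is the usual simple‑function approximation built on top of (iii). Throughout I write $\varphi\colon\Omega\to\mathbb{R}^{\mathbb{N}}$ for the map $\varphi(\omega)=(X_1(\omega),X_2(\omega),\ldots)$, which is $\mathcal{F}/\mathcal{B}(\mathbb{R}^{\mathbb{N}})$-measurable and satisfies $\mathbb{Q}(\cdot)=\mathbb{P}(\varphi^{-1}(\cdot))$ by~\eqref{12}. For (i) I would verify the $\sigma$-field axioms straight from~\eqref{warAinvar}: $\Omega\in\mathcal{I}^{\mathbb{X}}$ with witness $B=\mathbb{R}^{\mathbb{N}}$; if $A$ has witness $B$ then $\Omega\setminus A$ has witness $\mathbb{R}^{\mathbb{N}}\setminus B$; and if $A_j$ has witness $B_j$, $j\ge1$, then $\bigcup_j A_j$ has witness $\bigcup_j B_j$ — in each case one checks that the displayed identity holds for \emph{every} $i\ge1$ simultaneously — and, taking $i=1$, every element of $\mathcal{I}^{\mathbb{X}}$ is of the form $\varphi^{-1}(B)$, hence lies in $\mathcal{F}$. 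For (ii), a witness $B$ of $A\in\mathcal{I}^{\mathbb{X}}$ displays $A=\{\omega:(X_i(\omega),X_{i+1}(\omega),\ldots)\in B\}\in\sigma(X_i,X_{i+1},\ldots)$ for each $i\ge1$, so $A\in\bigcap_{i\ge1}\sigma(X_i,X_{i+1},\ldots)=\mathcal{T}^{\mathbb{X}}$.

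The substance is in (iii). The ``if'' direction is immediate, since $\mathcal{I}_{\mathbb{Q}}\subseteq\mathcal{B}(\mathbb{R}^{\mathbb{N}})$, so a $B\in\mathcal{I}_{\mathbb{Q}}$ satisfying~\eqref{warAinvar} witnesses invariance of $A$ with respect to $\mathbb{X}$. For the ``only if'' direction I would show that \emph{any} witness $B\in\mathcal{B}(\mathbb{R}^{\mathbb{N}})$ of~\eqref{warAinvar} already belongs to $\mathcal{I}_{\mathbb{Q}}$. Using the hypothesis at $i=1$ and at $i=2$, together with the definition~\eqref{transfT} of $T^{-1}$,
\[
\varphi^{-1}(B)=A=\{\omega:(X_2(\omega),X_3(\omega),\ldots)\in B\}=\varphi^{-1}(T^{-1}B),
\]
hence $\varphi^{-1}((B\setminus T^{-1}B)\cup(T^{-1}B\setminus B))=\emptyset$; applying $\mathbb{P}$ and~\eqref{12} gives $\mathbb{Q}((B\setminus T^{-1}B)\cup(T^{-1}B\setminus B))=0$, i.e.\ $B$ is almost invariant for $\mathbb{Q}$. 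No appeal to Lemma~\ref{l2}(iii) is needed here: the witness is itself almost invariant.

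For (iv) I would combine (iii) with the standard machine. By (iii), each $A\in\mathcal{I}^{\mathbb{X}}$ equals $\varphi^{-1}(B)$ for some $B\in\mathcal{I}_{\mathbb{Q}}$, so $\mathcal{I}^{\mathbb{X}}\subseteq\varphi^{-1}(\mathcal{I}_{\mathbb{Q}})$ and an $\mathcal{I}^{\mathbb{X}}$-measurable rv $X$ is $\varphi^{-1}(\mathcal{I}_{\mathbb{Q}})$-measurable. Then: for $X=I_A$ with $A\in\mathcal{I}^{\mathbb{X}}$, choose $B\in\mathcal{I}_{\mathbb{Q}}$ with $A=\varphi^{-1}(B)$ and set $Q=I_B$, so that $Q\circ\varphi=I_A=X$ on all of $\Omega$; extend to $\mathcal{I}^{\mathbb{X}}$-measurable simple $X$ by linearity; and for general $X$ take $\mathcal{I}^{\mathbb{X}}$-measurable simple $X_n\to X$ pointwise, pick corresponding $\mathcal{I}_{\mathbb{Q}}$-measurable simple $Q_n$, and put $Q=\limsup_n Q_n$, which is $\mathcal{I}_{\mathbb{Q}}$-measurable (possibly extended-real-valued) and satisfies $Q\circ\varphi=\limsup_n(Q_n\circ\varphi)=\limsup_n X_n=X$ on $\Omega$. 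Since $Q\circ\varphi=X$ is finite everywhere, replacing $Q$ by $Q\cdot I(|Q|<\infty)$ makes it a genuine real-valued rv with $X=Q((X_1,X_2,\ldots))$. The one point needing care is the ``only if'' half of (iii) — in particular, recognizing that one must use~\eqref{warAinvar} at two distinct shifts to produce the symmetric-difference identity; everything else is routine bookkeeping.
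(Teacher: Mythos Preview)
Your proposal is correct and follows essentially the same approach as the paper. For (i) and (iv) the paper merely cites references (Shiryaev for the $\sigma$-field axioms, Williams and the Monotone-Class Theorem for the representation of $\mathcal{I}^{\mathbb{X}}$-measurable rv's), whereas you spell these out directly via the simple-function approximation; for (ii) and the substantive ``only if'' direction of (iii) your argument coincides with the paper's almost verbatim, including the use of~\eqref{warAinvar} at $i=1$ and $i=2$ to obtain $\mathbb{Q}((B\setminus T^{-1}B)\cup(T^{-1}B\setminus B))=0$.
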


\begin{proof}
Part (i) is known; see, for example, Shiryaev (\cite{shir1996}, Chapter~V). 

For (ii) observe that, by the definition of $\mathcal{I}^{\mathbb{X}}$,  we have, for any $A\in \mathcal{I}^{\mathbb{X}}$,
 $$A\in\sigma(X_i,X_{i+1},\ldots) \textrm{  for all } i\geq 1.$$  This gives $A\in \mathcal{T}^{\mathbb{X}}$ and so $\mathcal{I}^{\mathbb{X}}\subseteq \mathcal{T}^{\mathbb{X}}$.

To show part~(iii), note that if $B\in \mathcal{B}(\mathbb{R}^{\mathbb{N}})$ satisfies~\eqref{warAinvar} then
\begin{equation}\label{nowy1}
\{\omega\in\Omega:\ (X_1(\omega),X_2(\omega),\ldots)\in B\}
= \{\omega\in\Omega:\ (X_2(\omega),X_3(\omega),\ldots)\in B\}.
\end{equation}
From~\eqref{12}
\begin{align*}
&\mathbb{Q}((B\setminus T^{-1}B)\cup(T^{-1}B\setminus B)) \\
&=\mathbb{P}(\{\omega: (X_1(\omega),X_2(\omega),\ldots)\in B\textrm{ and }(X_2(\omega),X_3(\omega),\ldots)\notin B\})\\
&+ \mathbb{P}(\{\omega: (X_2(\omega),X_3(\omega),\ldots)\in B\textrm{ and }(X_1(\omega),X_2(\omega),\ldots)\notin B\})\\
&=2\mathbb{P}(\emptyset)=0,
\end{align*}
by~\eqref{nowy1}. This means that $B\in\mathcal{I}_{\mathbb{Q}}$.

To prove part~(iv), one can use the Monotone-Class Theorem and part~(iii) of~Lemma~\eqref{InvSeq}, repeating reasoning given in Williams (\cite{wil1991}, Chapter~A3). 
\end{proof}

The following theorem asserts that the rv~$W$ such that $W=\lim_{n\to\infty}X_{k_n:n}$ $\mathbb{P}$-a.s. can be taken equal to~$\gamma_0(X_1|\mathcal{I}^{\mathbb{X}})$ ($\gamma_1(X_1|\mathcal{I}^{\mathbb{X}})$).

\begin{tw}\label{ergodyczneNowe}
Let $\mathbb{X}=(X_n,n\ge 1)$ be a~strictly stationary sequence and $(k_n,n\ge 1)$ be a~sequence of  integers satisfying~\eqref{warK}. Then
\[
X_{k_n:n}\xrightarrow{\mathbb{P}-a.s.}\gamma_0(X_1|\mathcal{I}^{\mathbb{X}})\ (\gamma_1(X_1|\mathcal{I}^{\mathbb{X}}))\textrm{ according as }\lambda=0\ (\lambda=1).
\]
\end{tw}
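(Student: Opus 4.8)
The plan is to derive Theorem~\ref{ergodyczneNowe} from Theorem~\ref{tw.3} by transporting everything through the canonical representation of the stationary sequence $\mathbb{X}$. First I would set up the machinery already described in the text: define the stationary measure $\mathbb{Q}$ on $(\mathbb{R}^{\mathbb{N}},\mathcal{B}(\mathbb{R}^{\mathbb{N}}))$ by \eqref{12}, the coordinate rv $Y$ by \eqref{12A}, and the shifted coordinate sequence $\mathbb{Y}=(Y_n,n\ge 1)$ by \eqref{12B}; then \eqref{r6} is satisfied and Theorem~\ref{tw.3} applies, giving $Y_{k_n:n}\xrightarrow{\mathbb{Q}-a.s.}\gamma_0(Y|\mathcal{I}_{\mathbb{Q}})$ in the case $\lambda=0$. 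As usual it suffices to treat $\lambda=0$, the case $\lambda=1$ following by replacing $\mathbb{X}$ with $(-X_n,n\ge 1)$ and invoking \eqref{wzor4}.

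Next I would transfer the almost sure convergence back to $(\Omega,\mathcal{F},\mathbb{P})$. Since $\mathbb{X}$ and $\mathbb{Y}$ have the same law under $\mathbb{P}$ and $\mathbb{Q}$ respectively, and $X_{k_n:n}=\psi_n(X_1,X_2,\ldots)$, $Y_{k_n:n}=\psi_n(Y_1,Y_2,\ldots)$ for the same measurable map $\psi_n$ (the $k_n$-th order statistic of the first $n$ coordinates), the sequence $(X_{k_n:n},n\ge 1)$ converges $\mathbb{P}$-a.s.\ to the rv $W:=\gamma_0(Y|\mathcal{I}_{\mathbb{Q}})\circ(X_1,X_2,\ldots)$, because the event on which a sequence of measurable functions of a random element converges is itself a measurable event, preserved under pushforward. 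So $W$ is a version of the limit; the task is to identify it with $\gamma_0(X_1|\mathcal{I}^{\mathbb{X}})$ a.s.

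For the identification I would verify that $W$ satisfies the three defining properties of $\gamma_0(X_1|\mathcal{I}^{\mathbb{X}})$. Measurability: $\gamma_0(Y|\mathcal{I}_{\mathbb{Q}})$ is $\mathcal{I}_{\mathbb{Q}}$-measurable, hence by Lemma~\ref{InvSeq}(iv) its composition with $(X_1,X_2,\ldots)$ equals some $\mathcal{I}^{\mathbb{X}}$-measurable rv $\mathbb{P}$-a.s.; thus $W$ is $\mathcal{I}^{\mathbb{X}}$-measurable up to a null set, and one may take that version. The inequality $\mathbb{P}(X_1\ge W|\mathcal{I}^{\mathbb{X}})=1$ a.s.\ transfers from $\mathbb{Q}(Y\ge\gamma_0(Y|\mathcal{I}_{\mathbb{Q}})|\mathcal{I}_{\mathbb{Q}})=1$ a.s.\ via the pushforward, once one checks that conditional probabilities given $\mathcal{I}^{\mathbb{X}}$ on $(\Omega,\mathcal{F},\mathbb{P})$ correspond under the map $(X_1,X_2,\ldots)$ to conditional probabilities given $\mathcal{I}_{\mathbb{Q}}$ on $(\mathbb{R}^{\mathbb{N}},\mathcal{B}(\mathbb{R}^{\mathbb{N}}),\mathbb{Q})$ — this uses \eqref{12}, Lemma~\ref{InvSeq}(iii), and the factorization in Lemma~\ref{InvSeq}(iv). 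For maximality, suppose $Q_0$ is $\mathcal{I}^{\mathbb{X}}$-measurable with $\mathbb{P}(X_1\ge Q_0|\mathcal{I}^{\mathbb{X}})=1$ a.s.; writing $Q_0=R(X_1,X_2,\ldots)$ with $R$ $\mathcal{I}_{\mathbb{Q}}$-measurable (Lemma~\ref{InvSeq}(iv)), the same correspondence yields $\mathbb{Q}(Y\ge R|\mathcal{I}_{\mathbb{Q}})=1$ a.s., so $R\le\gamma_0(Y|\mathcal{I}_{\mathbb{Q}})$ $\mathbb{Q}$-a.s.\ by the definition of the conditional left endpoint, whence $Q_0\le W$ $\mathbb{P}$-a.s. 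By the almost sure uniqueness of $\gamma_0(X_1|\mathcal{I}^{\mathbb{X}})$, we conclude $W=\gamma_0(X_1|\mathcal{I}^{\mathbb{X}})$ a.s., which finishes the proof.

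The main obstacle I anticipate is the bookkeeping around the correspondence between conditional expectations on the two spaces — making precise that $\mathbb{E}_{\mathbb{P}}(f(X_1,X_2,\ldots)\mid\mathcal{I}^{\mathbb{X}})=(\mathbb{E}_{\mathbb{Q}}(f\mid\mathcal{I}_{\mathbb{Q}}))\circ(X_1,X_2,\ldots)$ $\mathbb{P}$-a.s.\ for bounded measurable $f$. This is where Lemma~\ref{InvSeq}, especially parts (iii) and (iv), does the real work, and one has to be careful that $\mathcal{I}^{\mathbb{X}}$ is exactly the pullback of $\mathcal{I}_{\mathbb{Q}}$ under the map $\omega\mapsto(X_1(\omega),X_2(\omega),\ldots)$ (modulo null sets) so that defining properties of conditional expectation — the measurability and the integral-matching over generating sets — carry over cleanly. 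Everything else is routine transfer-of-structure.
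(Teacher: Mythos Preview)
Your approach is essentially the paper's: both pass to the canonical space via \eqref{12}--\eqref{12B}, invoke Theorem~\ref{tw.3}, and then verify that the a.s.\ limit satisfies the three defining properties of $\gamma_0(X_1|\mathcal{I}^{\mathbb{X}})$ by shuttling between $(\Omega,\mathcal{F},\mathbb{P})$ and $(\mathbb{R}^{\mathbb{N}},\mathcal{B}(\mathbb{R}^{\mathbb{N}}),\mathbb{Q})$ through Lemma~\ref{InvSeq}. One small correction: for the measurability step you cite Lemma~\ref{InvSeq}(iv), but that lemma runs the other way (from $\mathcal{I}^{\mathbb{X}}$-measurable on $\Omega$ to an $\mathcal{I}_{\mathbb{Q}}$-measurable factor); to show that $Q\circ(X_1,X_2,\ldots)$ is $\mathcal{I}^{\mathbb{X}}$-measurable up to null sets when $Q$ is $\mathcal{I}_{\mathbb{Q}}$-measurable you want Lemma~\ref{l2}(iii) (replace an almost invariant set by a strictly invariant one) --- the paper avoids this wrinkle by taking $\lim_{n\to\infty}X_{k_n:n}$ itself as the candidate and using the strictly invariant sigma-field $\tilde{\mathcal{I}}$ via \eqref{tezabis} to get exact $\mathcal{I}^{\mathbb{X}}$-measurability.
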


\begin{proof}
As in the proof of~Theorem~\ref{tw.3}, we can restrict ourselves to the case~$\lambda=0$. From the previous discussion, we already know that $\lim_{n\to\infty} X_{k_n:n}$ exists $\mathbb{P}$-a.s. (possibly infinite). For definitness on the set (of probability $\mathbb{P}$ zero) of $\omega\in\Omega$ such that $\lim_{n\to\infty}X_{k_n:n}(\omega)$ does not exist, let us set, for example, $\lim_{n\to\infty}X_{k_n:n}(\omega)=-\infty$. The proof is completed by showing that the following three conditions hold.
\begin{enumerate}
\item
$\lim_{n\to\infty}X_{k_n:n}$ is $\mathcal{I}^{\mathbb{X}}$-measurable.
\item
$\mathbb{P}(X_1\ge \lim_{n\to\infty}X_{k_n:n}|\mathcal{I}^{\mathbb{X}})=1$ $\mathbb{P}$-a.s.
\item
If $\tilde{W}$ is an $\mathcal{I}^{\mathbb{X}}$-measurable rv such that
\begin{equation}\label{nowy1A}
\mathbb{P}(X_1\ge \tilde{W}|\mathcal{I}^{\mathbb{X}})=1\quad  \mathbb{P}\textrm{-a.s.}
\end{equation}
\end{enumerate}
then $\tilde{W}\le\lim_{n\to\infty}X_{k_n:n}$ $\mathbb{P}$-a.s.

Condition~1 means that, for all $A\in\mathcal{B}(\mathbb{R})$,
$$\{\omega\in\Omega:\ \lim_{n\to\infty}X_{k_n:n}(\omega)\in A\}\in \mathcal{I}^{\mathbb{X}},$$
which, by the definition of $\mathcal{I}^{\mathbb{X}}$, is equivalent to the following requirement:
\begin{align*}
\textrm{for all } A\in\mathcal{B}(\mathbb{R}) \textrm{ there exists } B\in \mathcal{B}(\mathbb{R}^{\mathbb{N}})  \textrm{ such that for all  } n\ge 1 \\
\{\omega\in\Omega:\ \lim_{n\to\infty}X_{k_n:n}(\omega)\in A\}=\{\omega\in\Omega:\ (X_n(\omega),X_{n+1}(\omega),\ldots)\in B\}.
\end{align*}
One can take $B=\{(x_1,x_2,\ldots)\in\mathbb{R}^{\mathbb{N}}:\ \lim_{n\to\infty} x_{k_n:n}\in A\}$, where $\lim_{n\to\infty}x_{k_n:n}$ is defined to equal $-\infty$ if this limit does not exist. Indeed, by~\eqref{tezabis}, $B\in\tilde{\mathcal{I}}$, which ensures that $B\in \mathcal{B}(\mathbb{R}^{\mathbb{N}})$ and $\{\omega\in\Omega:\ (X_1(\omega),X_{2}(\omega),\ldots)\in B\}=\{\omega\in\Omega:\ (X_n(\omega),X_{n+1}(\omega),\ldots)\in B\}$ for all $n\ge 1$.

 Showing condition~2 amounts to proving that
\[
\mathbb{E}_{\mathbb{P}}(I(X_1\ge \lim_{n\to\infty}X_{k_n:n}) I(A))=\mathbb{E}_{\mathbb{P}}(1\cdot I(A))\quad\textrm{ for all }A\in\mathcal{I}^{\mathbb{X}}.
\]
This is equivalent to
\begin{equation}\label{nowy2}
\mathbb{P}(\{\omega\in\Omega\colon X_1(\omega)\ge\lim_{n\to\infty}X_{k_n:n}(\omega)\}\cap A)=\mathbb{P}(A) \quad\textrm{ for all }A\in\mathcal{I}^{\mathbb{X}}.
\end{equation}
By part~(iii) of Lemma~\ref{InvSeq}, for any $A\in\mathcal{I}^{\mathbb{X}}$ there exists $B\in\mathcal{I}_{\mathbb{Q}}$ such that~\eqref{warAinvar} holds. Therefore to prove~\eqref{nowy2} it suffices to show that
\begin{equation}\label{nowy3}
\mathbb{Q}(\{(x_1,x_2,\ldots)\in\mathbb{R}^{\mathbb{N}}: x_1\ge \lim_{n\to\infty}x_{k_n:n}\}\cap B)=\mathbb{Q}(B)\textrm{ for all  }B\in\mathcal{I}_{\mathbb{Q}}.
\end{equation}
To see this, note that by~\eqref{teza4.1},
\[
\mathbb{E}_{\mathbb{Q}}(I(Y\ge \lim_{n\to\infty}Y_{k_n:n}) I(B))=\mathbb{E}_{\mathbb{Q}}(1\cdot I(B))\quad\textrm{ for all }B\in\mathcal{I}_{\mathbb{Q}},
\]
which is equivalent to~\eqref{nowy3}. The proof of condition~2 is completed.

For condition~3, assume that~\eqref{nowy1A} holds for some $\mathcal{I}^{\mathbb{X}}$-measurable rv $\tilde{W}$. Then, by part~(iv) of~Lemma~\ref{InvSeq}, there exists an~$\mathcal{I}_{\mathbb{Q}}$-measurable rv~$Q$ such that $\tilde{W}=Q((X_1,X_2,\ldots))$. Hence \eqref{nowy1A} 
can be rewritten as
\[
\mathbb{P}(\{\omega\in\Omega\colon X_1(\omega)\ge Q((X_1(\omega), X_2(\omega),\ldots))\}\cap A)=\mathbb{P}(A)\textrm{ for all }A\in\mathcal{I}^{\mathbb{X}},
\]
which implies
\[
\mathbb{Q}(\{(x_1,x_2,\ldots)\in\mathbb{R}^{\mathbb{N}}: x_1\ge Q((x_1,x_2,\ldots))\}\cap B)=\mathbb{Q}(B)\textrm{ for all  }B\in\tilde{\mathcal{I}},
\]
because for any $B\in\tilde{\mathcal{I}}$, $A=\{\omega\in\Omega\colon (X_1(\omega),X_2(\omega),\ldots)\in B\}\in\mathcal{I}^{\mathbb{X}}$. By part~(iii) of~Lemma~\ref{l2}, we also have
\[
\mathbb{Q}(\{(x_1,x_2,\ldots)\in\mathbb{R}^{\mathbb{N}}: x_1\ge Q((x_1,x_2,\ldots))\}\cap B)=\mathbb{Q}(B)\textrm{ for all  }B\in\mathcal{I}_{\mathbb{Q}},
\]
which is equivalent to
\[
\mathbb{Q}(\{Y\ge Q\}\cap B)=\mathbb{Q}(B)\textrm{ for all  }B\in\mathcal{I}_{\mathbb{Q}},
\]
that is to
\[
\mathbb{Q}(Y\ge Q|\mathcal{I}_{\mathbb{Q}})=1\quad \mathbb{Q}\textrm{-a.s.}
\]
Since Q is $\mathcal{I}_{\mathbb{Q}}$-measurable, by the definition of~$\gamma_0(Y|\mathcal{I}_{\mathbb{Q}})$, we get
\[
Q\le\gamma_0(Y|\mathcal{I}_{\mathbb{Q}})\quad \mathbb{Q}\textrm{-a.s.}
\]
Theorem~\ref{tw.3} implies that
\[
\mathbb{Q}(Q\le\lim_{n\to\infty} Y_{k_n:n})=1
\]
and hence that
\[
\mathbb{P}(\omega\in\Omega\colon Q((X_1(\omega), X_2(\omega),\ldots))\le\lim_{n\to\infty}X_{k_n:n}(\omega))=1,
\]
which is the desired conclusion.
\end{proof}

\section{Examples}
We will apply results of previous sections to some families of strictly stationary sequences of rv's. In particular, we will show that Example \ref{ex1} and Theorem \ref{l3} are special cases of Theorem \ref{ergodyczneNowe}.

\subsection{Sequences of identical rv's}
Let $X_n=X$ for all $n\ge 1$, where $X$ is some rv. Then $\mathbb{X}=(X_n,n\ge 1)$ is strictly stationary. Moreover
\begin{equation}\label{s5w1}
\mathcal{I}^{\mathbb{X}}=\sigma(X).
\end{equation}
Indeed, in this case $\mathcal{T}^{\mathbb{X}}=\sigma(X)$ so part~(ii) of Lemma~\ref{InvSeq} gives $\mathcal{I}^{\mathbb{X}}\subseteq\sigma(X)$. To show that $\sigma(X)\subseteq\mathcal{I}^{\mathbb{X}}$ it suffices to observe that
\begin{align*}
    \sigma(X)&=\{ \{\omega\in\Omega\colon X(\omega)\in A\}\colon A\in\mathcal{B}(\mathbb{R})\}\\
    &\subseteq 
    \{ \{\omega\in\Omega\colon (X(\omega),X(\omega),\ldots)\in B\}\colon B\in\mathcal{B}(\mathbb{R}^{\mathbb{N}})\} =\mathcal{I}^{\mathbb{X}}.
\end{align*}
By~\eqref{s5w1} Theorems~\ref{ergodyczneNowe} and~\ref{tw.2}~(i) immediately give
\[
X_{k_n:n}\xrightarrow{\mathbb{P}\textrm{-a.s.}}\gamma_0(X_1|\mathcal{I}^{\mathbb{X}}) =\gamma_0(X|\sigma(X))=X\ (\gamma_1(X_1|\mathcal{I}^{\mathbb{X}})=X)
\]
according as $\lambda=0$ ($\lambda=1$)
for any sequence $(k_n,n\ge 1)$ of integers satisfying~\eqref{warK}. Note that the above conclusion agrees with that of~Example 2.1.

\subsection{Strictly stationary and ergodic processes}
Let $\mathbb{X}=(X_n,n\ge 1)$ be a~strictly stationary and ergodic sequence of rv's. Ergodicity means that the measure of any set $A\in\mathcal{I}^{\mathbb{X}}$ is either 0 or 1; see, for example Shiryaev (\cite{shir1996}, p.413). Consequently any $\mathcal{I}^{\mathbb{X}}$-measurable extended rv is $\mathbb{P}$-almost surely constant. Indeed, let $Z$ be $\mathcal{I}^{\mathbb{X}}$-measurable. Then, for any $a\in\mathbb{R}$,
\[
\{\omega\in\Omega\colon Z(\omega)\le a\}\in\mathcal{I}^{\mathbb{X}}\textrm{ and so }\mathbb{P}(Z\le a)=0\textrm{ or 1}.
\]
By taking $a_0=\sup\{a\in\mathbb{R}\colon\mathbb{P}(Z\le a)=0\}$, we get
\begin{align*}
\mathbb{P}(Z\le a)
=\left\{  
\begin{array}{lcl}
0 &  \textrm{ if } &  a<a_0 \\
1 &  \textrm{ if } &  a>a_0
\end{array}, \right.
\end{align*}
which clearly forces $\mathbb{P}(Z=a_0)=1$ as required.

In particular $\gamma_0(X_1|\mathcal{I}^{\mathbb{X}})$ is $\mathbb{P}$-almost surely constant as an $\mathcal{I}^{\mathbb{X}}$-measurable extended rv. Hence by Theorem~\ref{stalyKwantyl} we have $\gamma_0(X_1|\mathcal{I}^{\mathbb{X}})=\gamma_0^{X_1}$ $\mathbb{P}$-a.s. Using the same arguments we show that also $\gamma_1(X_1|\mathcal{I}^{\mathbb{X}})=\gamma_1^{X_1}$ $\mathbb{P}$-a.s. Now Theorem~\ref{ergodyczneNowe} gives, for any sequence $(k_n,n\ge 1)$ of positive integers satisfying~\eqref{warK},
\[
X_{k_n:n}\to\gamma_0(X_1|\mathcal{I}^{\mathbb{X}}) =\gamma_0^{X_1}\ (\gamma_1(X_1|\mathcal{I}^{\mathbb{X}})=\gamma_1^{X_1})\quad \mathbb{P}\textrm{-a.s.}
\]
according as $\lambda=0$ ($\lambda=1$). Thus we have deduced Theorem~\ref{l3} as a~special case of Theorem~\ref{ergodyczneNowe}.

\subsection{Random shift and scaling of strictly stationary and ergodic processes}
Using results of Sections~5.1 and~5.2 we can describe the almost sure limiting behaviour of extreme and intermediate order statistics corresponding to the following sequences of rv's:
\[
\mathbb{R}_n=(R_n,n\ge 1),\quad \mathbb{S}_n=(S_n,n\ge 1),
\]
where $R_n=X_n+U$, $S_n=V\cdot X_n$, $n\ge 1$, $(X_n,n\ge 1)$ is a~strictly stationary and ergodic process, $U$ is an~rv and $V$ is a~non-negative rv. Indeed, for every $1\le k\le n$, $R_{k:n}=X_{k:n}+U$ and $S_{k:n}=V\cdot X_{k:n}$. Therefore, for any sequence $(k_n, n\ge 1)$ of positive integers satisfying~\eqref{warK}, we get, as $n\to\infty$,
\[
R_{k_n:n}=X_{k_n:n}+U\xrightarrow{\mathbb{P}-a.s.}\gamma_0^{X_1}+U\quad (\gamma_1^{X_1}+U)
\]
and
\[
S_{k_n:n}=V\cdot X_{k_n:n}\xrightarrow{\mathbb{P}-a.s.}V\cdot\gamma_0^{X_1}\quad (V\cdot\gamma_1^{X_1}),
\]
according as $\lambda=0$ ($\lambda=1$).

\appendix
\section{Appendix}

For the convenience of the reader we recall here the definition of essential supremum and its existence theorem. This material is taken from Chow, Robbins and Siegmund (\cite{chow1971}, Chapter~1).
\begin{df}
We say that a~rv $Y$ is the essential supremum of a~family of rv's $\{X_t,t\in T\}$ and write $Y=\textrm{ess} \sup_{t\in T} X_t$ if
\begin{description}
 \item[(i)]
 $\mathbb{P}(Y\ge X_t)=1$ for every $t\in T$;
 \item[(ii)]
 if $\tilde{Y}$ is any rv such that $\mathbb{P}(\tilde{Y}\ge X_t)=1$ for every $t\in T$, then $\tilde{Y}\ge Y$ a.s.
\end{description}
\end{df}

\begin{tw}
For any family of rv's $\{X_t,t\in T\}$, $Y=\textrm{ess}\sup_{t\in T}X_t$ exists, and for some countable subset $C$ of~$T$ we have
\[
Y=\sup_{t\in C} X_t.
\]
\end{tw}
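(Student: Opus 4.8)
The statement to prove is the classical existence theorem for the essential supremum, so the plan is the standard one: reduce to bounded random variables, then extract a countable subfamily whose supremum is ``as large as possible'' in expectation. First I would fix a strictly increasing continuous bijection $\varphi\colon[-\infty,\infty]\to[0,1]$ (for instance an affine rescaling of $x\mapsto\arctan x$, sending $-\infty$ to $0$ and $+\infty$ to $1$). Since $\varphi$ is increasing and continuous it commutes pointwise with countable suprema, and, because $\varphi^{-1}$ is again increasing and continuous, $Y=\textrm{ess}\sup_{t\in T}X_t$ holds if and only if $\varphi(Y)=\textrm{ess}\sup_{t\in T}\varphi(X_t)$; moreover if the latter essential supremum equals $\sup_{t\in C}\varphi(X_t)$ for some countable $C$, then $Y=\varphi^{-1}(\sup_{t\in C}\varphi(X_t))=\sup_{t\in C}X_t$. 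Hence, replacing each $X_t$ by $\varphi(X_t)$, I may assume $0\le X_t\le 1$ for all $t\in T$.

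Next I would let $\mathcal{C}$ be the collection of all countable subsets of $T$. For $C\in\mathcal{C}$ the function $\sup_{t\in C}X_t$ is a countable supremum of measurable functions, hence an rv, and it is bounded, so $\mathbb{E}(\sup_{t\in C}X_t)\in[0,1]$. Put $\alpha=\sup_{C\in\mathcal{C}}\mathbb{E}(\sup_{t\in C}X_t)\le 1$, choose $C_n\in\mathcal{C}$ with $\mathbb{E}(\sup_{t\in C_n}X_t)\to\alpha$, and set $C=\bigcup_{n\ge 1}C_n\in\mathcal{C}$ and $Y=\sup_{t\in C}X_t$. Since $Y\ge\sup_{t\in C_n}X_t$ for every $n$, monotonicity of expectation together with the definition of $\alpha$ forces $\mathbb{E}(Y)=\alpha$.

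It then remains to check the two defining properties with this $Y$ and this $C$. For property (i): fix $t\in T$ and apply the construction to $C\cup\{t\}\in\mathcal{C}$; the rv $M=\max(Y,X_t)=\sup_{s\in C\cup\{t\}}X_s$ satisfies $M\ge Y$ pointwise and $\mathbb{E}(M)\le\alpha=\mathbb{E}(Y)$, hence $\mathbb{E}(M-Y)=0$ with $M-Y\ge 0$, so $M=Y$ a.s., i.e.\ $X_t\le Y$ a.s.\ and $\mathbb{P}(Y\ge X_t)=1$. For property (ii): if $\tilde Y$ satisfies $\mathbb{P}(\tilde Y\ge X_t)=1$ for every $t\in T$, then in particular for every $t\in C$, and since $C$ is countable $\mathbb{P}(\tilde Y\ge X_t\textrm{ for all }t\in C)=1$, i.e.\ $\tilde Y\ge\sup_{t\in C}X_t=Y$ a.s. This proves both that $\textrm{ess}\sup_{t\in T}X_t$ exists and that it equals $\sup_{t\in C}X_t$ for the countable set $C$ constructed above.

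The one step that needs care — and the main obstacle, such as it is — is the initial reduction to bounded rv's when $\pm\infty$ values are allowed: one must choose $\varphi$ so that it is a genuine order isomorphism of the extended line onto a bounded interval, which guarantees without exception both the commutation $\varphi(\sup_{t\in C}X_t)=\sup_{t\in C}\varphi(X_t)$ and the equivalence of the two essential-supremum problems. After that everything is routine measure theory: measurability of countable suprema, monotone behaviour of expectations, and the fact that a nonnegative rv with vanishing expectation is a.s.\ zero.
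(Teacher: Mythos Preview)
The paper does not actually prove this theorem: it is stated without proof in the Appendix, with the remark that the material is taken from Chow, Robbins and Siegmund (\cite{chow1971}, Chapter~1). Your argument is correct and is essentially the standard proof one finds in that reference: reduce to uniformly bounded random variables via an order-preserving homeomorphism of $[-\infty,\infty]$ onto $[0,1]$, then maximise $\mathbb{E}\bigl(\sup_{t\in C}X_t\bigr)$ over countable $C\subseteq T$ and verify the two defining properties. There is nothing to compare against, and no gap in what you wrote.
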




\begin{thebibliography}{99}


\bibitem{brad2007}  \textsc{Bradley, R. T.} (2007).
\emph{Introduction to Strong Mixing Conditions},
Vol. 1, Kendrick Press, Heber City, Utah.

\bibitem{Ch92} \textsc{Chanda, K.C.}   (1992).  
Bahadur-Kiefer representation properties of intermediate order statistics.
\textit{Statist. Probab. Lett.} \textbf{14} 175--178.

\bibitem{Ch85} \textsc{Cheng, S.} (1985).
On limiting distributions of order statistics with variable ranks from stationary sequences.
\textit{Ann. Probab.} \textbf{13} 1326--1340.

\bibitem{chow1971} \textsc{Chow, Y. S., Robbins, H.} and \textsc{Siegmund, D.} (1971).
\emph{Great Expectations: The Theory of Optimal Stopping},
Houghton Mifflin, New York.

\bibitem{dem2012} \textsc{Dembi\'nska, A.} (2012). 
Limit theorems for proportions of observations falling into
random regions determined by order statistics.
\textit{Aust. N. Z. J. Stat.} \textbf{54} 199--210.

\bibitem{dem2014} \textsc{Dembi\'nska, A.} (2014). 
Asymptotic behaviour of central order statistics from stationary processes.
\textit{Stochastic Process. Appl.} \textbf{124} 348--372.

\bibitem{dur2010} \textsc{Durrett, R.} (2010).
\emph{Probability: Theory and Examples},
4th ed. Cambridge U. Press.

\bibitem{embre1997}  \textsc{Embrechts, P., Kl\"uppelberg, C.} and \textsc{Mikosch, T.} (1997).
\emph{Modelling Extremal Events for Insurance and Finance},
Springer-Verlag, Berlin.

\bibitem{F05}  \textsc{Fasen, V.} (2005). Extremes of regularly varying L\'evy-driven mixed moving average processes.
\textit{Adv. in Appl. Probab.} \textbf{37} 993--1014. 

\bibitem{grim2004} \textsc{Grimmet, G. R.} and \textsc{ Stirzaker, D. R.} (2004).
\emph{Probability and Random Processes},
Oxford University Press, New York.

\bibitem{K84} \textsc{Klass, M. J.} (1984).
 The minimal growth rate of partial maxima. 
\textit{Ann. Probab.} \textbf{12} 380-389.

\bibitem{K85} \textsc{Klass, M. J.} (1985).
The Robbins-Siegmund series criterion for partial maxima. 
\textit{Ann. Probab.} \textbf{13} 1369-1370.

\bibitem{LLR83} \textsc{Leadbetter, M. R.,  Lindgren, G.} and  \textsc{ Rootz\'en, H.}  (1983).
\emph{Extremes and Related Properties of Random Sequences and Processes},
Springer-Verlag, New York.

\bibitem{LR88} \textsc{Leadbetter, M. R.} and \textsc{Rootz\'en, H.} (1988).
Extremal theory for stochastic processes.
\textit{Ann. Probab.} \textbf{16} 431--478.

\bibitem{MS00} \textsc{Mikosch, T.} and \textsc{St\u{a}ric\u{a}, C.}  (2000). Limit theory for the sample autocorrelations and
extremes of a GARCH(1, 1) process. \textit{Ann. Statist.} \textbf{28} 1427--1451. 

\bibitem{shir1996} \textsc{Shiryaev, A. N.} (1996). 
\emph{Probability}, 2nd ed.
 Springer, New York.

\bibitem{smirnov1952} \textsc{Smirnov, N. V.} (1952). 
Limit distributions for the terms of a variational series.
\textit{Amer. Math. Soc. Transl. Ser. 1}  \textbf{11} 
82--143. Original published in 1949.

\bibitem{tomkins} \textsc{Tomkins, R. J.} (1975).
On conditional medians. 
\textit{Ann. Probab.} \textbf{3} 375--379.

\bibitem{W97} \textsc{Wang, H.} (1997).
Generalized zero-one laws for large-order statistics.
\textit{Bernoulli} \textbf{3} 429-444.

\bibitem{W80} \textsc{Watts, V.} (1980).
The almost sure representation of intermediate order statistics.
\textit{Z. Wahrsch. Verw. Gebiete} \textbf{54} 281--285.

\bibitem{WRL82} \textsc{Watts, V.,   Rootz\'en, H.} and \textsc{Leadbetter, M. R.}  (1982).
On limiting distributions of intermediate order statistics from stationary sequences.
\textit{Ann. Probab.} \textbf{10} 653--662.

\bibitem{wil1991} \textsc{Williams, D.} (1991). 
\emph{Probability with Martingales},
Cambridge University Press.

\end{thebibliography}
\end{document}